\documentclass[11pt]{amsart}
\usepackage[utf8]{inputenc}
\usepackage[margin=1in]{geometry}
\usepackage{graphicx}
\usepackage{amsmath,amsfonts,amssymb,latexsym,amsthm}
\usepackage{epstopdf}
\usepackage{enumerate}
\usepackage[title]{appendix}

\usepackage[dvipsnames]{xcolor}

\usepackage[colorlinks=true, pdfstartview=FitV, linkcolor=RoyalBlue,citecolor=ForestGreen, urlcolor=blue]{hyperref}

\definecolor{labelkey}{rgb}{0,0,1}

\newtheorem{theorem}{Theorem}
\newtheorem{corollary}[theorem]{Corollary}
\numberwithin{equation}{section}

\newtheorem{lemma}[theorem]{Lemma}
\newtheorem{proposition}[theorem]{Proposition}

\newtheorem{remark}[theorem]{Remark}

\usepackage{amsthm}
\usepackage{chngcntr}
\usepackage{lipsum}  

\begin{document}
   \title{Regularity of solution maps of the generalized surface quasi-geostrophic equations}
	\author[]{ Gerard Misio\l{}ek}
	\address[GM]{Department of Mathematics, 
    University of Notre Dame, Notre Dame, IN 46556, USA 
    and 
    Institute for Mathematical Sciences, 
    Stony Brook University, Stony Brook, NY 11794-3660, USA 
	}
	\email{gmisiole@nd.edu }
	\author[]{ Xuan-Truong Vu}
	\address[TV]{Department of Mathematics, Statistics, and Computer Science, University of Illinois at Chicago, Chicago, Illinois 60607, U.S.A.
	}
	\email{tvu25@uic.edu}
  	\author[]{Tsuyoshi Yoneda}
	\address[TY]{Graduate School of Economics, Hitotsubashi University, 2-1 Naka, Kunitachi, Tokyo 186-8601, Japan
	}
	\email{t.yoneda@r.hit-u.ac.jp}

	\begin{abstract}
We study regularity properties of the data-to-solution maps of the family of generalized surface quasi-geostrophic equations which includes both the 2D incompressible Euler and the standard surface quasi-geostrophic equations. 
We prove that the Lagrangian solution maps, interpreted as Riemannian exponential maps on the group of exact Sobolev class diffeomorphisms, are real analytic 
and, consequently, the Cauchy problems are locally well-posed in the sense of Hadamard. 
On the other hand, we also show that the corresponding Eulerian solution maps are nowhere locally uniformly continuous on bounded subsets in the Sobolev topology and fail to be continuous in the standard (large-) H\"older topologies. 
These results sharpen earlier theorems and further highlight the striking dichotomy between regularity properties of the solution maps in the Lagrangian and Eulerian formulations. 
\end{abstract}
    \maketitle
\section{Introduction}\label{intro}

Consider the Cauchy problem for the family of 
generalized SQG equations 
\begin{equation} \label{eq:gSQG} 
\begin{cases} 
\partial_t \theta+(u\cdot\nabla)\theta = 0, 
\qquad 
x\in \mathbb{R}^2,\ t\in \mathbb{R} 
\\ 
\theta(x,0)=\theta_{0} (x), 
\end{cases} 
\end{equation} 
where $\nabla^{\perp}=(-\partial_2, \partial_1)$ 
is the symplectic gradient and where the velocity field 
$u=u(t,x)$ 
and the scalar function 
$\theta=\theta(t,x)$ 
are related by 
\begin{equation*} \label{vel} 
u=-\nabla^{\perp}(-\Delta)^{-1+\frac{\beta}{2}}\theta 
\end{equation*} 
with $\beta \ge 0$. 
Equations of this type have been studied extensively in recent years, see e.g., \cite{bauer2024geometric}, \cite{castro2025unstable}, \cite{CCCGW12},
\cite{CJK25},
\cite{constantin2016contrast},  \cite{CMT94}, \cite{inci2015regularity}, \cite{jolly2021sqg}, \cite{MV2023}, \cite{Wash},  \cite{YZJ}.

Our main goal in this paper is to study regularity properties of 
the solution map of the Cauchy problem \eqref{eq:gSQG}. 
In \cite{MV2023} the authors showed that each of the generalized SQG equations has a Lagrangian reformulation as a system of ODE 
on the group of exact diffeomorphisms preserving 
the volume form $\mu$ of the underlying domain 
which can be viewed as the configuration space of 
the dynamical system defined by \eqref{eq:gSQG}. 
When completed in the Sobolev $H^s$ norm with $s>2$ 
this space becomes a smooth Banach manifold 
$\mathcal{D}_{ex}^s$ whose tangent space 
$T_{e}\mathcal{D}_{ex}^s$ at the identity map $e(x)=x$ 
consists of divergence free vector fields characterized by 
single-valued stream functions. 
Moreover, it is also a topological group 
under composition of diffeomorphisms. 
The data-to-solution map associated with the Lagrangian reformulation of \eqref{eq:gSQG} turns out to be the (infinite-dimensional) Riemannian exponential map of the right-invariant metric defined at the identity $e$ by the inner product
\begin{align} \label{eq:beta-met} 
\langle v, w \rangle_{\dot{H}^{ -\beta/2 } } 
&= 
\int 
\phi_v \, (-\Delta)^{\frac{2-\beta}{2}} \phi_w 
\; d\mu, 
\quad 
v, w\in T_{e}\mathcal{D}_{ex}^s 
\end{align} 
where $v = \nabla^\perp\phi_v$, 
$w = \nabla^\perp\phi_w$ 
and $\phi_v, \, \phi_w$ 
are the corresponding stream functions 
of class $H^{s+1}$. 
More precisely, we have 
\begin{equation} \label{eq:exp} 
 u_0 \mapsto \exp_e^{_{\beta}}t u_0 := \gamma(t) 
\end{equation} 
where $t\mapsto\gamma(t)$ is the unique geodesic curve 
of diffeomorphisms 
starting from the identity $\gamma(0)=e$ 
in the direction of $\dot{\gamma}(0)= u_0$. 
Our first result is 
\begin{theorem} \label{analytic} 
For $s > 2$ and any $0\le \beta\le 1$ 
the (Lagrangian) solution map 
$u_0\mapsto \exp_e^{_{\beta}} u_0$ 
from $ T_{e}\mathcal{D}_{ex}^s$ 
to $\mathcal D_{ex}^s$ is analytic. 
\end{theorem}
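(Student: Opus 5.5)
The plan is to show that the geodesic equation of the right-invariant metric \eqref{eq:beta-met}, written in Lagrangian variables $(\gamma,\dot\gamma)$ on $T\mathcal{D}_{ex}^s$, is given by a vector field (the geodesic spray) that is real analytic. Then the Cauchy--Kovalevskaya type theorem for analytic ODEs on Banach manifolds (the analytic dependence of solutions on initial data) shows that the flow, and in particular $u_0\mapsto\gamma(1)=\exp_e^{\beta}u_0$, is analytic on its domain. Following \cite{MV2023}, I write the equation as
\begin{equation*}
\ddot\gamma = F_\beta(\gamma,\dot\gamma) := R_\gamma\circ P_\beta\circ R_{\gamma^{-1}}(\gamma,\dot\gamma),
\end{equation*}
where $R_\gamma$ is right translation and $P_\beta(u)=-\nabla p$ collects the pressure-type terms of the Euler--Arnold equation $\partial_t u + \nabla_u u = -\nabla p$. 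Here $u=\dot\gamma\circ\gamma^{-1}$, and $p$ is determined from $u$ by a zeroth-order (for $\beta=0$) or first-order (for $0<\beta\le 1$) nonlocal operator. Concretely, with $\theta=(-\Delta)^{1-\beta/2}\phi_u$, transport of $\theta$ gives
\begin{equation*}
\partial_t u + \nabla_u u = \nabla^\perp(-\Delta)^{-1+\beta/2}\big[u\cdot\nabla,(-\Delta)^{1-\beta/2}\big]\phi_u - \nabla^\perp[u\cdot\nabla,\nabla^\perp{}^{-1}]\ldots
\end{equation*}
I would put this in the cleanest form as $\dot u$-equation $=$ commutator of $u\cdot\nabla$ with $A_\beta=\nabla^\perp(-\Delta)^{-1+\beta/2}$ applied to $\theta$, and observe that the commutator is bounded $H^s\to H^s$ when $\beta\le 1$. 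This is exactly where the restriction $\beta\le 1$ enters: for $\beta\le 1$ the symbol of $A_\beta$ has order $\beta-1\le 0$ relative to $u$, so the commutator loses no derivatives.

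The key step is to prove that the conjugated operator $(\gamma,v)\mapsto R_\gamma\circ Q\circ R_{\gamma^{-1}}(v)$ is real analytic from $\mathcal{D}^s_{ex}\times H^s$ to $H^s$ for each of the operators $Q$ that appear. These are $(-\Delta)^{-1}$, $\nabla$, products, and the commutator $[u\cdot\nabla,(-\Delta)^{1-\beta/2}]$. For differential operators and the inverse Laplacian this is classical (as in Ebin--Marsden and Lichtenfelz--Misio\l ek--Preston). The conjugate $R_\gamma\partial_i R_{\gamma^{-1}} = \sum_j ((D\gamma)^{-1})_{ji}\partial_j$ is a polynomial in $D\gamma$ divided by the Jacobian determinant, hence analytic. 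The conjugate of $\Delta^{-1}$ is then the inverse of an analytic family of invertible operators, which is analytic.

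The main obstacle is the fractional Laplacian $(-\Delta)^{\alpha}$ with noninteger $\alpha=1-\beta/2$ (or its commutator), which is not a polynomial in conjugated derivatives. I would first try the representation
\begin{equation*}
(-\Delta)^{\alpha}=(-\Delta)\,(-\Delta)^{\alpha-1},\qquad (-\Delta)^{\alpha-1}f=c_\alpha\int_0^\infty \lambda^{\alpha-1}(\lambda-\Delta)^{-1}f\,d\lambda,
\end{equation*}
valid for $0<1-\alpha<1$. Conjugating by $\gamma$ turns each resolvent into $(\lambda - R_\gamma\Delta R_{\gamma^{-1}})^{-1}$. This is analytic in $\gamma$, with power-series expansion around a fixed $\gamma_0$ given by a Neumann series. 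I would then need $\lambda$-uniform bounds: $\|(\lambda-\Delta_\gamma)^{-1}\|_{H^{s-2}\to H^{s}}\lesssim 1$ and $\|\cdot\|_{H^{s}\to H^s}\lesssim(1+\lambda)^{-1}$, with the conjugated operator kept within a small analytic neighborhood. These make the integral converge, after interpolation, in the operator norm on a complex neighborhood, which gives analyticity.

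Alternatively, I could write the commutator as a singular integral with kernel $|x-y|^{-2-2\alpha+\ldots}$ and substitute $x=\gamma(a)$, $y=\gamma(b)$. Then I would expand $|\gamma(a)-\gamma(b)|^{-k}$ analytically using $\gamma(a)-\gamma(b)=\int_0^1 D\gamma(b+\tau(a-b))\,d\tau\,(a-b)$. The difficulty there is controlling the resulting $H^s$ norms, which is why I would try the resolvent route first. Once $F_\beta$ is analytic, the analytic ODE theorem gives analyticity of the exponential map near $0$. Rescaling $\exp_e(tu_0)=\gamma_{u_0}(t)$ and using global-in-interval existence together with composition of analytic flow maps then extends analyticity to the whole domain of $\exp_e^{\beta}$.
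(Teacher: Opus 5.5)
\textbf{There is a genuine gap, in the step you yourself call key.} Your architecture is sound: analytic spray on $T\mathcal{D}^s$, then the analytic flow theorem, then extension along the flow. But the mechanism you describe does not deliver analyticity of the spray into $H^s$.

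\textbf{The commutator gain is missing.} The spray is $F_\beta(\gamma,v)=\big([u\cdot\nabla,A_\beta]\theta\big)\circ\gamma$, with $u=v\circ\gamma^{-1}$ and $\theta$ the order-$(1-\beta)$ image of $u$. It lies in $H^s$ only because of the commutator cancellation. Taken separately, $u\cdot\nabla A_\beta\theta=\pm u\cdot\nabla u$ and $A_\beta(u\cdot\nabla\theta)$ are only in $H^{s-1}$. Your Balakrishnan--Neumann argument, even if completed, gives analyticity of $\gamma\mapsto R_\gamma(-\Delta)^{\alpha}R_{\gamma^{-1}}$ as an operator of its natural order. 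Composing this with the conjugated $u\cdot\nabla$ and the other conjugated factors makes $F_\beta$ analytic only into $H^{s-1}$, so the analytic ODE theorem on $T\mathcal{D}^s$ cannot yet be invoked.

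What you need is the one-derivative gain of the conjugated commutator, i.e.\ local boundedness in $H^s$ of the complexified $F_\beta$ for complex $\gamma$ near a real one. At complex $\gamma$ there is no composition map through which to pull back the Eulerian commutator estimate, and $H^s$ bounds at real $\gamma$ alone do not give analyticity into $H^s$. The proposal mentions this only in passing (``or its commutator''). It can be done inside your framework by commuting through the resolvents, $[X,(\lambda-\Delta_\gamma)^{-1}]=(\lambda-\Delta_\gamma)^{-1}[X,\Delta_\gamma](\lambda-\Delta_\gamma)^{-1}$ with $X=R_\gamma(u\cdot\nabla)R_{\gamma^{-1}}$, and then estimating the resulting integrals. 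That is where the actual work of the proof lies.

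\textbf{Your resolvent bounds fail on $\mathbb{R}^2$.} They are false already at $\gamma=e$: $\|(\lambda-\Delta)^{-1}\|_{H^s\to H^s}=\lambda^{-1}$ and $\|(\lambda-\Delta)^{-1}\|_{H^{s-2}\to H^s}=\max(1,\lambda^{-1})$. Hence $\int_0\lambda^{\alpha-1}\|\cdot\|\,d\lambda$ diverges; your bounds would make $(-\Delta)^{\alpha-1}$ bounded on $H^s(\mathbb{R}^2)$, which it is not. They presuppose a spectral gap. On $\mathbb{R}^2$ you must integrate $(-\Delta_\gamma)(\lambda-\Delta_\gamma)^{-1}=I-\lambda(\lambda-\Delta_\gamma)^{-1}$ and handle low frequencies separately.

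\textbf{Comparison with the paper.} The paper's route avoids the commutator entirely. Using the transport of $\theta$, it writes the flow as the first-order system $\dot\gamma=R_\gamma\,\mathrm{curl}^{-1}(-\Delta)^{\beta/2}R_{\gamma^{-1}}\theta_0$ on $\mathcal{D}^s_{ex}$, with an analytic chart $v\mapsto e+v+\nabla\varphi_v$ obtained from the analytic implicit function theorem. There $\theta_0\in H^{s-1+\beta}$ is a linear parameter, and only one conjugated operator, of order $\beta-1$, must be analytic in $\gamma$. Your resolvent device is well suited to proving exactly that. It has to be proved for the conjugated operator as a whole, since $\gamma\mapsto\theta_0\circ\gamma^{-1}$ by itself loses a derivative. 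Your concluding step, extending analyticity along the flow to the whole domain of $\exp_e^\beta$, is fine.
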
 
with its immediate consequences 
\begin{corollary} \label{cor:locDiff} 
For each $s>2$ and $0\le\beta\le 1$ 
the Riemannian exponential map is a local diffemorphism 
from a neighborhood of $0$ in $T_{e}\mathcal{D}_{ex}^s$ 
to a neighborhood of $e$ in $\mathcal{D}_{ex}^s$. 
\end{corollary}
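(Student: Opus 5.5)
Corollary~\ref{cor:locDiff} follows from Theorem~\ref{analytic} by the inverse function theorem on Banach manifolds, once we identify the derivative of $\exp_e^{_{\beta}}$ at the origin. The plan has three steps.

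First, by Theorem~\ref{analytic} the map $u_0 \mapsto \exp_e^{_{\beta}} u_0$ from $T_e\mathcal{D}^s_{ex}$ to $\mathcal{D}^s_{ex}$ is real analytic. In particular it is $C^1$ on a neighborhood of $0$, which is what the inverse function theorem requires. We need the whole line $t\mapsto \exp_e^{_{\beta}} t u_0$, $0\le t\le1$, to exist for $u_0$ near $0$. This follows from the ODE formulation on $\mathcal{D}^s_{ex}$ used in the proof of the theorem: the domain of the exponential map is open and star-shaped around $0$.

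Second, we compute $d(\exp_e^{_{\beta}})_0$. For fixed $v\in T_e\mathcal{D}^s_{ex}$, the curve $\epsilon\mapsto \exp_e^{_{\beta}}(\epsilon v)=\gamma_v(\epsilon)$ is the geodesic with initial velocity $v$. This uses the rescaling property $\gamma_{\epsilon v}(1)=\gamma_v(\epsilon)$, which follows from uniqueness of solutions to the geodesic ODE, since the Christoffel map is quadratic in the velocity. Hence
\begin{equation*}
d(\exp_e^{_{\beta}})_0 v=\frac{d}{d\epsilon}\Big|_{\epsilon=0}\gamma_v(\epsilon)=\dot\gamma_v(0)=v ,
\end{equation*}
so the differential at $0$ is the identity on $T_e\mathcal{D}^s_{ex}$. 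Here we use the canonical identification $T_0(T_e\mathcal{D}^s_{ex})\cong T_e\mathcal{D}^s_{ex}$. The identity is a bounded linear isomorphism of this Hilbert space, being a closed subspace of $H^s$ vector fields.

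Third, we apply the inverse function theorem for $C^1$ (indeed analytic) maps between Banach manifolds, working in a chart of $\mathcal{D}^s_{ex}$ near $e$. This gives neighborhoods $U$ of $0$ and $V$ of $e$ such that $\exp_e^{_{\beta}}:U\to V$ is a diffeomorphism, with analytic inverse by the analytic version of the theorem.

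The only delicate point, and the main potential obstacle, is that the smooth structure on $\mathcal{D}^s_{ex}$ and the one in which Theorem~\ref{analytic} is proved must be the same. We also need the geodesic equation to be a genuine smooth (analytic) ODE on $T\mathcal{D}^s_{ex}$, with no derivative loss. This is exactly the content of the Lagrangian reformulation of \cite{MV2023} for $0\le\beta\le1$, so we take it as given from the proof of Theorem~\ref{analytic}.
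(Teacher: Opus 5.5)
Your proposal is correct and follows the paper's own proof. The paper also computes $d\exp_e^{\beta}(0)u_0=\dot\gamma(0)=u_0$ by the standard finite-dimensional argument and then applies the Inverse Function Theorem, relying on the analyticity from Theorem~\ref{analytic}; your rescaling identity $\gamma_{\epsilon v}(1)=\gamma_v(\epsilon)$ and the remark on the domain of the exponential map simply fill in what the paper calls a standard calculation.
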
 
and
\begin{corollary} \label{lwp} 
For each $s > 2$ and $0\le\beta\le 1$ 
the generalized SQG equation in \eqref{eq:gSQG} 
is locally well-posed in $H^s$ in the sense of Hadamard. 
\end{corollary}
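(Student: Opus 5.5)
The statement to prove is Corollary~\ref{lwp}. The plan is to deduce it from Theorem~\ref{analytic} and Corollary~\ref{cor:locDiff}. Hadamard well-posedness means three things: existence, uniqueness, and continuous dependence of the Eulerian solution $\theta(t)$ on $\theta_0$ in $H^s$ on a short time interval.

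\textbf{Step 1: from $\theta_0$ to the Lagrangian flow.} Given $\theta_0\in H^s$, set $u_0=-\nabla^\perp(-\Delta)^{-1+\beta/2}\theta_0$. For $0\le\beta\le1$ this gives $u_0\in T_e\mathcal D^{s+1-\beta}_{ex}\subset T_e\mathcal D^{s}_{ex}$, with stream function in $H^{s+2-\beta}$. The map $\theta_0\mapsto u_0$ is bounded linear, so it is analytic. Here some care is needed with low frequencies on $\mathbb R^2$; I would work with the stream-function description of $T_e\mathcal D^s_{ex}$ used in \cite{MV2023}.

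By Theorem~\ref{analytic} and the standard ODE theory underlying it (smooth spray on $T\mathcal D^s_{ex}$), the geodesic $\gamma(t)=\exp_e^{\beta} tu_0$ exists for $|t|<T$. Here $T$ depends only on a bound for $\|u_0\|_{H^s}$, by rescaling $t u_0$. The geodesic depends analytically on $(t,u_0)$ with values in $\mathcal D^s_{ex}$, and $\dot\gamma$ depends analytically on $(t,u_0)$ in $T\mathcal D^s_{ex}$.

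\textbf{Step 2: back to the Eulerian solution.} Define $u(t)=\dot\gamma(t)\circ\gamma(t)^{-1}$ and $\theta(t)=\theta_0\circ\gamma(t)^{-1}$. By the derivation in \cite{MV2023}, the geodesic equation of the metric \eqref{eq:beta-met} is equivalent to \eqref{eq:gSQG}: transport of $\theta$ together with the constitutive relation $u=-\nabla^\perp(-\Delta)^{-1+\beta/2}\theta$. This gives existence.

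For uniqueness, take any $H^s$ solution. Its velocity lies in $H^{s}$ with $s>2$, hence is $C^1$, so it generates a unique flow in $\mathcal D^s_{ex}$. That flow solves the geodesic equation with the same initial data, and uniqueness of ODE solutions on the Banach manifold forces it to coincide with $\gamma$.

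\textbf{Step 3: continuous dependence (main obstacle).} I expect this step to be the hardest, because composition and inversion are only continuous, not Lipschitz or differentiable, in the relevant Sobolev topologies. Nevertheless the maps
\[
(\eta,f)\mapsto f\circ\eta^{-1},\qquad \mathcal D^s_{ex}\times H^s\to H^s ,
\]
are continuous, since $\mathcal D^s$ is a topological group and the right action on $H^s$ is continuous. Composing them with the analytic map $\theta_0\mapsto\gamma(t)$ shows that $\theta_0\mapsto\theta(t)$ is continuous into $H^s$, locally uniformly in $t$. Joint continuity in $t$ yields $\theta\in C([0,T),H^s)$.

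The main technical point is to verify continuity of the composition map jointly in both arguments, and not merely separately. I would use the standard argument: approximate $f$ by smooth functions, and combine this with the uniform bound $\|f\circ\eta^{-1}\|_{H^s}\le C(\|\eta^{-1}\|_{H^s})\|f\|_{H^s}$ on bounded sets of $\mathcal D^s$. This continuous (not uniformly continuous) dependence is exactly what Hadamard well-posedness requires.
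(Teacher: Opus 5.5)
Your proposal is correct and follows essentially the same route as the paper's proof. Existence comes from setting $u(t)=\dot\gamma(t)\circ\gamma(t)^{-1}$ for the geodesic $\gamma(t)=\exp_e^{\beta}tu_0$ and passing back to the Eulerian formulation; uniqueness comes from uniqueness of the Lagrangian flows as ODE solutions; and continuous dependence comes from analyticity of $\exp_e^{\beta}$, boundedness of $\theta_0\mapsto u_0$, and continuity of $(\eta,f)\mapsto f\circ\eta^{-1}$. Your extra points are all things the paper passes over silently: the uniform existence time via rescaling, the use of joint continuity in $t$ to get continuity into $C([0,T];H^s)$, and the low-frequency caveat on $\mathbb{R}^2$. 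They do not change the argument.
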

In \cite{BL} Bourgain and Li outlined a strategy for proving nowhere uniform continuity of the solution map for equations which are invariant under Galilean transformations. They also suggested that a functional analytic setup combining both Eulerian and Lagrangian frameworks may be useful, when applicable, 
to study equations whose velocity drifts are more singular than that of the 2D Euler equations.  
In \cite{MV2023}, using continuity arguments, 
the authors proved that 
the Eulerian data-to-solution map of \eqref{eq:gSQG} 
is not uniformly continuous in bounded sets in $H^s$ 
in the case of all equations 
that are small $\beta$-perturbations of the incompressible $2$D Euler equations. 
In this paper we will sharpen that result as follows. 
\begin{theorem}\label{nonuniform} 
Let $s>2$. For any $0\le \beta\le 1$ 
the corresponding (Eulerian) solution map 
$\theta_0 \mapsto \theta (t)$ 
is nowhere locally uniformly continuous 
as a map from bounded sets in $H^s$ 
to $ C\big([0,T];H^s(\mathbb  R^2)\big)$ 
where $T>0$ is a local existence time. 
\end{theorem}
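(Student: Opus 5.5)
Fix an arbitrary $\bar\theta_0\in H^s(\mathbb R^2)$ and a ball $B$ around it. I will construct two sequences of data $\theta_{0,n},\tilde\theta_{0,n}\in B$ with $\|\theta_{0,n}-\tilde\theta_{0,n}\|_{H^s}\to0$ whose solutions satisfy $\liminf_n\sup_{t\in[0,T]}\|\theta_n(t)-\tilde\theta_n(t)\|_{H^s}\gtrsim 1$. Following the Bourgain--Li strategy mentioned above, the argument combines Eulerian and Lagrangian information. The key structural fact is that $\theta$ is transported: $\theta(t)=\theta_0\circ\gamma(t)^{-1}$, where $\gamma(t)=\exp_e^{\beta}(tu_0)$. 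By Theorem~\ref{analytic} and Corollary~\ref{lwp}, the flow $\gamma$ depends analytically, and in particular Lipschitz continuously, on $u_0$ in $H^{s}$ on a common interval $[0,T]$, uniformly for data in $B$. A technical point is that $u_0$ is $\beta-1$ derivatives less regular than $\theta_0$. I will work with $s$ such that $\theta_0\in H^s$ gives $u_0\in H^{s+1-\beta}\subset H^s$, so that Theorem~\ref{analytic} applies.

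\emph{Construction.} Approximate $\bar\theta_0$ by a smooth compactly supported function $\theta^*$. Choose data of the form
\[
\theta_{0,n}=\theta^*+\lambda^{-s}\rho(x)\sin(\lambda x_1)
\quad\text{and}\quad
\tilde\theta_{0,n}=\theta^*+\delta_n\psi+\lambda^{-s}\rho(x)\sin(\lambda x_1).
\]
Here $\lambda=\lambda_n\to\infty$, $\delta_n\to0$, and $\rho$ is a bump function. The low-frequency perturbation $\delta_n\psi$ is chosen so that the velocity of $\theta^*+\delta_n\psi$ differs from that of $\theta^*$ by a nearly constant (Galilean-like) vector $\delta_n c$ on $\operatorname{supp}\rho$. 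The high-frequency parts have $O(1)$ $H^s$ norm. They are small in $H^{s-1}$ and in the relevant velocity norms, since $u$ gains $1-\beta\ge0$ derivatives and the amplitude is $\lambda^{-s}$. Hence the two flows $\gamma_n,\tilde\gamma_n$ are, to leading order, the flows of the background solutions. The first step is to prove, via Lipschitz dependence of $\exp_e^\beta$ from Theorem~\ref{analytic}, that
\[
\gamma_n(t)-\tilde\gamma_n(t)=t\delta_n c+O(\delta_n^2)+O(\lambda^{-\epsilon})
\]
in $C^1$ near $\operatorname{supp}\rho$.

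\emph{Decorrelation.} Write $\theta_n(t)=\theta_{0,n}\circ\gamma_n(t)^{-1}$, and similarly for $\tilde\theta_n$. The high-frequency components become $\lambda^{-s}\rho\sin(\lambda x_1)\circ\gamma_n^{-1}$ and the analogous expression with $\tilde\gamma_n^{-1}$. Their phases differ by $\lambda\, t\,\delta_n c_1+o(1)$. Choose $\delta_n=\lambda_n^{-1}$ (times a fixed constant), so that $\delta_n\to0$ while the phase shift is of order $t$. Then at a fixed $t\in(0,T]$ the $H^s$ norm of the difference is $\gtrsim|\sin(\tfrac12 t c_1\kappa)|\,\|\rho\|_{L^2}$ up to $o(1)$ errors, which is bounded below. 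The difference of the low-frequency parts is $O(\delta_n)$ in $H^s$, so it does not cancel this lower bound.

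\emph{Main obstacle.} The delicate step is controlling the $H^s$ norm of $\lambda^{-s}\rho\sin(\lambda x_1)\circ\gamma^{-1}$ with only $H^s$ ($s>2$) control on $\gamma$. The flow is not smooth, so the leading term of the $s$-th derivatives, $\lambda^s(\nabla\gamma^{-1})^{\otimes s}$ applied to the oscillation, must be isolated. All remaining terms must be shown to be $O(\lambda^{-\epsilon})$. I would handle this by replacing $\gamma_n$ with the background flow $\gamma^*$ of $\theta^*$, which is smooth since $\theta^*$ is. The error $\gamma_n-\gamma^*$ is small in $H^s$, and $\lambda\|\gamma_n-\gamma^*\|_{L^\infty}\to0$ can be arranged from the small $H^{s-1}$ size of the high-frequency data. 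Composition estimates then reduce the computation to smooth phases. A second subtlety is $\beta=1$, where $u$ has the same regularity as $\theta$. There the Lipschitz bound in Theorem~\ref{analytic} must be used in the norm on $T_e\mathcal D^s_{ex}$ induced by $\theta_0\in H^s$. I would verify that the high-frequency part of the velocity still has size $o(\lambda^{-1})$ in $L^\infty$ by interpolation.
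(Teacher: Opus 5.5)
\textbf{There is a genuine gap.} Your route differs from the paper's: you use a high-frequency packet $\lambda^{-s}\rho\sin(\lambda x_1)$ plus a Galilean-type shift with $\delta_n=\kappa/\lambda$. The gap is exactly where the phase has to be controlled.

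\textbf{The phase-control step.} Since $\delta_n\sim\lambda^{-1}$, the phase $\lambda(\gamma_n^{-1}-\tilde\gamma_n^{-1})_1$ is under control only if every error in $\gamma_n-\tilde\gamma_n$ beyond the leading term is small compared with $\lambda^{-1}$. Your expansion $t\delta_nc+O(\delta_n^2)+O(\lambda^{-\epsilon})$ has an error that swamps the main term unless $\epsilon>1$.

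That $O(\lambda^{-\epsilon})$ comes from comparing each flow with its smooth background. So you need $\lambda\|\gamma_n-\gamma^*\|_{L^\infty}\to0$, and Theorem \ref{analytic} does not give this:
\begin{itemize}
\item It can only be applied with velocities in $H^\sigma$, $\sigma>2$. There the high-frequency velocity has size $\lambda^{\sigma-s-1+\beta}$, so you only get $\lambda\|\gamma_n-\gamma^*\|_{L^\infty}\lesssim\lambda^{\sigma-s+\beta}$. This tends to zero only when $s>2+\beta$, which excludes, e.g., $\beta=1$ with $2<s\le3$.
\item Your appeal to the ``small $H^{s-1}$ size'' gives only $\lambda\cdot\lambda^{-1}=O(1)$, not $o(1)$. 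It would also require Lipschitz dependence at velocity level $H^{s-\beta}$, which needs $s-\beta>2$.
\item An $o(\lambda^{-1})$ bound on the initial velocity in $L^\infty$ does not propagate to the flows. That would need a separate low-norm Eulerian stability estimate, which you do not supply.
\end{itemize}

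\textbf{The leading term.} It is not $t\delta_nc$ but $-\delta_n\,dE_e^{\beta}(\theta^*)\psi$, a Jacobi field along the background flow. It equals $t$ times the velocity of $\psi$ plus $O(t^2)$. So you must either work at small $t$ or prove it does not vanish near $\operatorname{supp}\rho$ at the chosen time. This is the role of Lemma \ref{lemma_dense} and \eqref{e:fromlocdiff} in the paper.

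\textbf{How the paper avoids this.} The paper never tracks a phase. It takes a bump $\vartheta^{(n)}$ of fixed $H^s$ norm supported in a ball of radius $r_n\sim 1/n$ at $x_*$, perturbs the data by $w_*/n$, and needs only that the displacement of $x_*$ exceeds a fixed multiple of the support radius. This is read off from the second-order Taylor expansion of $E_e^{\beta}$ in $H^s$. The cross terms there are only $O(R/n)$ and are absorbed by taking $R$ small, via condition \eqref{condition}. The lower bound then follows from disjoint supports \eqref{ineq2} and \eqref{bdd}, using only $C^1$ control of the flows.

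\textbf{How you could repair your route.} You can keep the oscillatory data if you make two changes.
\begin{enumerate}
\item Do not compare $\gamma_n$ with $\gamma^*$ to accuracy $o(\lambda^{-1})$. Control only the mixed difference,
$\gamma_n-\tilde\gamma_n=-\delta_n\,dE_e^{\beta}(\theta^*)\psi+O(MR\delta_n)+O(M\delta_n^2)$ in $H^s\subset C^1$, to relative accuracy, with $R$ small as in the paper. A phase shift near $\pi$ known up to a small relative error suffices.
\item Do not compute $H^s$ norms of oscillations composed with $H^s$ flows. Use $\|f\|_{\dot H^s}\ge\|\nabla f\|_{L^2}^{s}\|f\|_{L^2}^{1-s}$ together with $\|F\circ\gamma^{-1}\|_{L^2}=\|F\|_{L^2}$ for volume-preserving $\gamma$. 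This needs only $C^1$ bounds on $\gamma_n,\tilde\gamma_n$.
\end{enumerate}
As written, the step your whole argument rests on is not justified on the full range $s>2$, $0\le\beta\le1$.
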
 
Theorem \ref{nonuniform} shows that the loss of uniform continuity of the Eulerian data-to-solution map, previously observed 
in~\cite{MV2023} for small values of~$\beta$, 
persists for the entire family. 
Together with Theorem \ref{analytic} and its corollaries, 
it further underscores the contrast between 
the Lagrangian and Eulerian formulations of 
the generalized SQG equations, namely, 
while the Lagrangian geodesic flow depends analytically on the initial data, 
the corresponding Eulerian evolution exhibits 
a sharp form of illposedness with respect to Sobolev perturbations. 

Our approach combines geometric constructions 
involving the group of exact volume-preserving diffeomorphisms and its right-invariant metric 
with perturbative arguments motivated by the approach of Bourgain and Li \cite{BL,BL2015} and 
Inci \cite{inci2015regularity}. 
It seems that the analytic dependence of 
the Riemannian exponential map in Theorem \ref{analytic} 
provides a convenient setting in which various other 
illposedness properties can be established systematically. 

Lastly, we turn to the Cauchy problem \eqref{eq:gSQG} 
in the setting of the H\"older $C^\alpha$ spaces. 
In a recent paper Choi, Jung and Kim \cite{CJK25} 
proved existence of unique local in time solutions 
with initial data in $L^1\cap C^\alpha$ and $\beta<\alpha<1$. 
The authors also derived a Lipschitz-type estimate for 
the corresponding solutions in Sobolev norms 
of negative index $\beta-1$. 
However, it turns out that the Eulerian solution map 
fails to be continuous in the $C^\alpha$ topology, 
which partially addresses the question and sharpens 
the statements in Jeong \cite{I-JJ}. 
Our result can be stated for general domains. 
\begin{theorem} \label{holder case} 
Assume that the functions $\theta$ and $\theta_n$ satisfy 
the following conditions 
\begin{equation} \label{general setting} 
\begin{cases} 
&\theta(t,x)=(D\eta)\theta_0\circ\eta^{-1}, 
\\ 
&\theta_n(t,x)=(D\eta_n)\theta_0\circ\eta^{-1}_n, 
\\ 
&\eta\in C^1([0,T]:C^{1+\alpha}) 
\quad \text{with} \quad \text{det}(D\eta)=1, 
\\ 
&\theta_0(x_0)=0 
\quad \text{and} \quad 
\theta_0(B(r,x_0))\in C^\alpha\setminus c^\alpha 
\quad \text{for any sufficiently small} \quad r>0, 
\\ 
&\eta_n=\eta-h_nt 
\quad (h_n\to 0), 
\end{cases} 
\end{equation} 
as well as the following alignment condition 
\begin{equation}\label{alignment}
\theta_0(x)\cdot\theta_0(y)>0 
\quad x,y\in B(x_0,r) \quad \text{for some small} \quad r>0.
\end{equation} 
Then there exists $\epsilon_0>0$ such that 
$$ 
\|\theta_n(t)-\theta(t)\|_\alpha>\|D\eta^{-1}\|^{-1}\epsilon_0 
$$ 
for $t\in(0,T]$,
where\footnote{In the 2D case this is always $1$.} 
\begin{equation}\label{lower bound}
\|D\eta^{-1}\|:=\inf_{t\in[0,T]}\sup_{|b|=1}|D\eta^{-1}(t)b|
\end{equation}
which is positive due to invertibility and continuity of $D\eta$.
\end{theorem}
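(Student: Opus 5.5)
The plan is to reduce the statement to a translation estimate in $C^\alpha$. Since $\eta_n=\eta-h_nt$ with $h_n$ a constant vector, we have $D\eta_n=D\eta$. Moreover $\eta_n(y)=x$ is equivalent to $\eta(y)=x+h_nt$, so $\eta_n^{-1}(x)=\eta^{-1}(x+th_n)$. Substituting into \eqref{general setting} gives
\begin{equation*}
\theta_n(t,x)=\theta(t,x+th_n).
\end{equation*}
Hence, writing $a=th_n\neq 0$ and $g_a(x)=\theta(t,x+a)-\theta(t,x)$, it suffices to show $\|g_a\|_\alpha\ge \|D\eta^{-1}\|^{-1}\epsilon_0$ for all small $a$. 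This is the classical fact that translation is not continuous at functions lying in $C^\alpha$ but not in the little H\"older space $c^\alpha$. We must show this quantitatively at the point $x^*=\eta(t,x_0)$.

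\textbf{Step 1 (testing points).} We have $\theta(t,x^*)=D\eta(x_0)\theta_0(x_0)=0$. Evaluating $g_a$ at the two points $x^*$ and $x^*-a$ gives
\begin{equation*}
g_a(x^*)-g_a(x^*-a)=\theta(t,x^*+a)+\theta(t,x^*-a).
\end{equation*}
Therefore
\begin{equation*}
\|g_a\|_\alpha\ge |a|^{-\alpha}\,\big|\theta(t,x^*+a)+\theta(t,x^*-a)\big|.
\end{equation*}

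\textbf{Step 2 (alignment).} Here we use \eqref{alignment}. Write $\theta(t,x^*\pm a)=D\eta(y_\pm)\theta_0(y_\pm)$ with $y_\pm=\eta^{-1}(x^*\pm a)\in B(x_0,r)$. Replace $D\eta(y_\pm)$ by $D\eta(x_0)$; the error is at most $C|y_\pm-x_0|^\alpha|\theta_0(y_\pm)|$, which is $o(|a|^\alpha)$ because $\theta_0(x_0)=0$ and $\theta_0$ is continuous. What remains is $D\eta(x_0)\big(\theta_0(y_+)+\theta_0(y_-)\big)$. Since $\theta_0(y_+)\cdot\theta_0(y_-)>0$, we get $|\theta_0(y_+)+\theta_0(y_-)|\ge|\theta_0(y_+)|$. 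Inverting $D\eta$ then gives
\begin{equation*}
|D\eta(x_0)w|\ge \|D\eta^{-1}\|^{-1}|w|.
\end{equation*}
This is exactly where the factor $\|D\eta^{-1}\|^{-1}$ in \eqref{lower bound} enters.

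\textbf{Step 3 (non-$c^\alpha$ lower bound).} It remains to show $|\theta_0(y_+)|\ge 2\epsilon_0|a|^\alpha$, using $|y_+-x_0|\sim|a|$, which holds because $\eta$ is bi-Lipschitz. I expect this to be the main obstacle. The hypothesis $\theta_0|_{B(r,x_0)}\in C^\alpha\setminus c^\alpha$ for every small $r$ only says that the H\"older quotient at $x_0$ does not vanish. I would first try to read this hypothesis, together with $\theta_0(x_0)=0$, as a lower bound
\begin{equation*}
\limsup_{y\to x_0}\frac{|\theta_0(y)|}{|y-x_0|^\alpha}\ge 4\epsilon_0>0,
\end{equation*}
valid uniformly in directions near $x_0$, as for the model $|x-x_0|^\alpha$-type data.

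If only a sequence of bad scales is available, I would change the testing points. Take a bad point $y$ at distance $\rho\ge |a|$ from $x_0$ and compare $g_a$ at $\eta(y)$ and at $x^*$. By alignment, the contributions $\theta(t,\eta(y)+a)$ and $\theta(t,\eta(y))$ cannot cancel against the vanishing values near $x^*$. This again yields a quotient $\gtrsim\epsilon_0$, after a case split into $|a|\ll\rho$ and $|a|\sim\rho$.

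Combining Steps 1 to 3 and absorbing the $o(|a|^\alpha)$ errors for $n$ large (equivalently $r$ small) gives
\begin{equation*}
\|\theta_n(t)-\theta(t)\|_\alpha>\|D\eta^{-1}\|^{-1}\epsilon_0
\end{equation*}
for every $t\in(0,T]$, uniformly in $n$.
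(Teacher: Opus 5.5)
Your reduction and Steps 1--2 are correct and coincide with the paper's computation. The paper tests $\theta_n-\theta$ at the same two points $\eta(t,x_0)$ and $\eta_n(t,x_0)=\eta(t,x_0)-th_n$. It uses the alignment condition in the same way to keep only one of the two terms, and it obtains the factor $\|D\eta^{-1}\|^{-1}$ the same way. Your identity $\theta_n(t,x)=\theta(t,x+th_n)$ and your $o(|a|^\alpha)$ control of the variation of $D\eta$ are in fact more careful than the paper, which treats $D\eta$ as a constant and replaces $\eta_n^{-1}\circ\eta(x_0)$ by $x_0-h_nT_n$.

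The genuine gap is Step 3, and it cannot be closed with $t$ fixed. For fixed $t$, the increment $a=th_n$ runs through scales and directions dictated by $h_n$. You therefore need $|\theta_0(y_+)|\gtrsim\epsilon_0|a|^\alpha$ at every one of them, which is essentially a $\liminf$-type bound $|\theta_0(y)|\ge c|y-x_0|^\alpha$. The hypothesis $\theta_0\notin c^\alpha$ only gives a $\limsup$: some sequence $\ell_k\to0$ with $|\theta_0(x_0+\ell_k)|\ge\varepsilon_0|\ell_k|^\alpha$. So your first option is an extra hypothesis, and the $\limsup$ you wrote would not suffice even then.

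Your fallback also fails. For $|a|\ll\rho$ the numerator $[\theta(\eta(y)+a)-\theta(\eta(y))]-\theta(x^*+a)$ is $O(|a|^\alpha)$, while $|\eta(y)-x^*|^\alpha\sim\rho^\alpha$. Alignment gives no lower bound on the difference of two aligned values.

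In fact no choice of test points can work. Take $\eta=\mathrm{id}$, $x_0=0$ and
\[
\theta_0(x)=\sum_{j\ge1}2^{-k_j\alpha}\bigl(\cos(2^{k_j}x_1)-1\bigr)-\min\{|x|^2,1\},\qquad k_{j+1}-k_j\to\infty .
\]
Then $\theta_0\in C^\alpha$, $\theta_0(0)=0$, and $\theta_0<0$ away from $0$, so alignment holds off $x_0$, which is all that is ever used. Also $|\theta_0(\pi2^{-k_n}e_1)|\ge 2\cdot 2^{-k_n\alpha}$, so all hypotheses hold. Now fix $t_0$ and set $h_n=2\pi2^{-k_n}t_0^{-1}e_1$. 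Every term with $j\ge n$ is invariant under the shift by $t_0h_n$, whence $\|\theta_n(t_0)-\theta(t_0)\|_\alpha=\|\theta_0(\cdot+t_0h_n)-\theta_0\|_\alpha\lesssim 2^{k_{n-1}-k_n}+2^{-k_n(1-\alpha)}\to0$.

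The missing idea, which is what the paper does, is to let the time depend on $n$. The paper takes the bad increments $\ell_n\to0$ at $x_0$ and evaluates at $t=T_n$ with $T_nh_n=-\ell_n$. This requires $|\ell_n|\ll|h_n|$ so that $T_n\to0$, and the paper tacitly takes $\ell_n$ parallel to $h_n$. The boost displacement then lands exactly on a bad scale. Since $\eta(T_n)\to\mathrm{id}$ in $C^{1+\alpha}$, your point $y_-=\eta^{-1}(T_n,x^*+\ell_n)$ equals $x_0+\ell_n+o(|\ell_n|)$, so $|\theta_0(y_-)|\ge(\varepsilon_0-o(1))|\ell_n|^\alpha$. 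Keeping $\theta_0(y_-)$ instead of $\theta_0(y_+)$ in Step 2, your Steps 1--2 then finish the argument.

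The price is that the lower bound is obtained only at the times $T_n\to0$, i.e.\ for $\sup_{t\in(0,T]}\|\theta_n(t)-\theta(t)\|_\alpha$, not at each fixed $t\in(0,T]$. The example above shows that the fixed-$t$ bound claimed in the statement cannot be derived from these hypotheses.
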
 
\noindent 
The last of the conditions in \eqref{general setting} 
is often referred to as a Galilean boost. 

\medskip

The paper is organized as follows. 
In Section 2 we recall the geometric formulation of the generalized SQG equations and derive the equivalent Lagrangian system on the diffeomorphism group. 
In Section 3 we prove analyticity 
of the exponential map and the corresponding 
local well-posedness properties 
followed by the non-uniform dependence result 
in Section 4
- all these results in the setting of Sobolev spaces. 
In Section 5 we show that the solution map fails to be continuous in the standard H\"olderian norms. 
Finally, the Appendix summarizes a few relevant facts 
concerning the analytic framework in Banach spaces and collects auxiliary inequalities involving fractional Sobolev norms used in the main arguments.

\section{Technical preliminaries and Lagrangian reformulation}	\label{geometricappro}

Our first task is to rewrite the SQG system \eqref{eq:gSQG} in the form which is convenient for our subsequent analysis. Let $S_{\beta,k}=R_k(-\Delta)^{-\frac{1-\beta}{2}}$ where $R_k=\partial_k(-\Delta)^{-1/2}$, ($k=1,2$) are the standard Riesz transforms, so that 
$u = - S_\beta^\perp\theta$ 
with components 
\begin{align}\label{eq:vel_comp}
u_1=S_{\beta,2} \theta \quad \text{ and }\quad u_2 
= 
-S_{\beta,1} \theta. 
\end{align}

From \eqref{eq:vel_comp} and the first of the equations in \eqref{eq:gSQG}, we obtain
	\begin{align}\label{gSQG1coor}
		\partial_t u_1 +u\cdot \nabla u_1
		=[ u\cdot \nabla,S_{\beta,2}]\theta
	\end{align}
and similarly 
	\begin{align}\label{gSQG2coor}
		\partial_t u_2 +u\cdot \nabla u_2
		=-[ u\cdot \nabla,S_{\beta,1}]\theta.
	\end{align}
Eliminating $\theta$ with the help of
	\begin{align}\label{eq:theta-u}
	R_1u_2-R_2u_1&=\big(R_1S_{\beta1}+R_2S_{\beta,2}\big)\theta=(-\Delta)^{-\frac{1-\beta}{2}}\theta
	\end{align} 
leads to the equation for the velocity field in Eulerian coordinates
	\begin{align}\label{rewriteeqn}
		\partial_t u +u\cdot \nabla u=\Bigg(
		\begin{matrix}
			[ u\cdot \nabla,-S_{\beta,2}]\\
			[ u\cdot \nabla,S_{\beta,1}]
		\end{matrix}\Bigg)(-\Delta)^{\frac{1-\beta}{2}}(R_2u_1-R_1u_2)=:q(u,u).
	\end{align}

		Equation \eqref{rewriteeqn} together with the initial condition 
		\begin{equation}\label{eq:IC}
		u(0)=u_0
		\end{equation} is an equivalent formulation of the Cauchy problem \eqref{eq:gSQG}.  That the latter implies the former follows directly from the above derivation. On the other hand, if $u$ is a solution of the Cauchy problem \eqref{rewriteeqn}-\eqref{eq:IC} then $\theta$ given by \eqref{eq:theta-u} satisfies the first equation in \eqref{eq:gSQG}. In fact, applying appropriate Riesz transforms to \eqref{gSQG1coor} and \eqref{gSQG2coor} and substituting into \eqref{eq:theta-u} after differentiating with respect to $t$ variable gives
        \begin{align}
         (-\Delta)^{-\frac{1-\beta}{2}}\partial_t\theta&=-\big(R_1S_{\beta,1}+R_2S_{\beta,2}\big) u\cdot \nabla\theta\\
            &=(-\Delta)^{-\frac{1-\beta}{2}}u\cdot \nabla\theta. \notag  
        \end{align}
        The equivalence of the two formulations is now a consequence of the following 

	\begin{lemma}\label{lemma_div}
		Let $s > 2$ and $T > 0$. If $u \in C([0,T];H^s)$ is a solution to \eqref{rewriteeqn} with  $\operatorname{div} u_0 = 0$, then 
		$\operatorname{div} u(t) = 0$ for all $0\le t\le T$.
	\end{lemma}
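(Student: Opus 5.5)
The plan is to derive a transport equation for $d:=\operatorname{div}u$ whose forcing term is linear in $d$ with coefficients bounded in terms of $\|\nabla u\|_{L^\infty}$. An $L^2$ energy estimate and Gronwall's inequality then give $d\equiv 0$ from $d(0)=0$. Since $u\in C([0,T];H^s)$ with $s>2$, we have $d\in C([0,T];H^{s-1})\subset C([0,T];L^2)$ and $\nabla u\in L^\infty$ uniformly in time, by Sobolev embedding. These are exactly the quantities the estimate needs, and the formal computation can be justified by mollification or by working directly in $H^{s-1}$ with the equation holding in $C([0,T];H^{s-1})$.

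\textbf{Step 1 (divergence of the equation).} Put $w=(-\Delta)^{\frac{1-\beta}{2}}(R_2u_1-R_1u_2)$ and take the divergence of \eqref{rewriteeqn}. On the left,
$$\operatorname{div}(\partial_tu+u\cdot\nabla u)=\partial_t d+u\cdot\nabla d+\sum_{i,j}\partial_iu_j\,\partial_ju_i .$$
On the right, expanding the commutators produces two kinds of terms. The first kind is $(\partial_1S_{\beta,2}-\partial_2S_{\beta,1})(u\cdot\nabla w)$, which vanishes because $\partial_1S_{\beta,2}=\partial_2S_{\beta,1}$ as Fourier multipliers. The remaining terms are
$$-\partial_1(u\cdot\nabla S_{\beta,2}w)+\partial_2(u\cdot\nabla S_{\beta,1}w).$$
Their part of the form $u\cdot\nabla(\cdot)$ vanishes by the same identity. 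What is left is the quadratic term $-\partial_1u\cdot\nabla S_{\beta,2}w+\partial_2u\cdot\nabla S_{\beta,1}w$.

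\textbf{Step 2 (identifying $S_{\beta,k}w$ with the Leray part).} Write the Hodge decomposition $u=\mathbb P u+\nabla\psi$ with $\Delta\psi=d$. The combination $R_2u_1-R_1u_2$ annihilates gradients, so $w$ depends only on $\mathbb Pu$. The relations \eqref{eq:vel_comp}--\eqref{eq:theta-u}, applied to the divergence free field $\mathbb Pu$, then give $S_{\beta,1}w$ and $S_{\beta,2}w$ as $\pm$ the components of $\mathbb Pu$. Substituting, the quadratic term becomes $\sum_{i,j}\partial_iu_j\,\partial_j(\mathbb Pu)_i$ up to the sign conventions of $q$. Combined with $\sum_{i,j}\partial_iu_j\partial_ju_i$ from the left-hand side, the forcing should reduce to
$$\partial_td+u\cdot\nabla d=-\sum_{i,j}\partial_iu_j\,\partial_i\partial_j\psi,\qquad \psi=\Delta^{-1}d .$$
This is the step I expect to be the main obstacle. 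The cancellation of the $\mathbb Pu$ contributions hinges on the exact signs in \eqref{rewriteeqn}, and this is precisely where the structure of $q$ must be used. I will first verify it with $d=0$, where the right-hand side must vanish because divergence free solutions of \eqref{eq:gSQG} solve \eqref{rewriteeqn}. Then I will track the $\nabla\psi$ contribution separately.

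\textbf{Step 3 (energy estimate).} Multiply by $d$ and integrate. The transport term contributes $\frac12\int(\operatorname{div}u)\,d^2$, which is bounded by $\|\nabla u\|_{L^\infty}\|d\|_{L^2}^2$. By Calderón--Zygmund, $\|\nabla^2\Delta^{-1}d\|_{L^2}\le C\|d\|_{L^2}$, so the forcing is also bounded by $C\|\nabla u\|_{L^\infty}\|d\|_{L^2}^2$. Hence
$$\frac{d}{dt}\|d\|_{L^2}^2\le C\sup_{[0,T]}\|u\|_{H^s}\,\|d\|_{L^2}^2 .$$
Since $d(0)=\operatorname{div}u_0=0$, Gronwall's inequality yields $d(t)=0$ for all $0\le t\le T$.
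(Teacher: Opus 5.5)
Your argument is correct. The step you flagged as the main obstacle closes with exactly the sign you wrote. Since $S_{\beta,k}w=R_k(R_2u_1-R_1u_2)$ and $(\mathbb P u)_i=u_i+\sum_j R_iR_ju_j$, the identity $R_1^2+R_2^2=-\mathrm{id}$ gives $S_{\beta,2}w=-(\mathbb P u)_1$ and $S_{\beta,1}w=(\mathbb P u)_2$. Hence the quadratic term $-\partial_1u\cdot\nabla S_{\beta,2}w+\partial_2u\cdot\nabla S_{\beta,1}w$ equals $+\sum_{i,j}\partial_iu_j\,\partial_j(\mathbb P u)_i$. Subtracting $\sum_{i,j}\partial_iu_j\,\partial_ju_i$ then leaves precisely $-\sum_{i,j}\partial_iu_j\,\partial_i\partial_j\psi$.

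The paper instead works with $\Phi=R_1u_1+R_2u_2=(-\Delta)^{-1/2}\operatorname{div}u$. It expands every commutator term by term in Riesz transforms and arrives at $\partial_t\Phi=\sum_{k,j}R_k\big(u_j\,R_k\partial_j\Phi\big)$. This is exactly your equation with $(-\Delta)^{-1/2}$ applied to both sides, using $R_k\partial_j\Phi=-\partial_k\partial_j\psi$ and $\sum_k R_kf_k=(-\Delta)^{-1/2}\sum_k\partial_kf_k$. So the two proofs rest on the same identity.

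Your derivation, via $\partial_1S_{\beta,2}=\partial_2S_{\beta,1}$ and the Leray identification, is shorter. It also makes visible that the $\mathbb P u$-part of $\operatorname{div}q$ cancels the convective term, leaving only the gradient part $\nabla\psi$. The paper's choice of $\Phi$ packages the equation as a Riesz-conjugated transport. Skew-adjointness of $R_k$ and one integration by parts then give $\frac12\frac{d}{dt}\|\Phi\|_{L^2}^2=\frac12\sum_k\int(\operatorname{div}u)(R_k\Phi)^2\,dx$, with no separate forcing term to bound.

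Your estimate controls $\|d\|_{L^2}$ rather than $\|\Phi\|_{L^2}=\|d\|_{\dot H^{-1}}$. This costs one derivative, but it is harmless for $s>2$, since you only need $d\in H^1$ and $\partial_t d\in C([0,T];L^2)$. Finally, the bound on $\partial_i\partial_j\Delta^{-1}d=-R_iR_jd$ is just Plancherel, so no Calder\'on--Zygmund theory is needed.
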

	\begin{proof}
		
We adapt here an argument from \cite{inci2015regularity}. From \eqref{rewriteeqn} we have 	
		\[
		u(t) = u_0 + \int_0^t \Big(q\big(u(s),u(s)\big) - u(s) \cdot \nabla u(s) \Big) \; ds
		\]
		 for $0 \leq t \leq T$. Let $\Phi(t)= R_1 u_1(t) +  R_2 u_2(t)$. Note that as $u_0$ is divergence free, we have $\Phi(0)=0$. From \eqref{gSQG1coor} and \eqref{gSQG2coor} we have
		\begin{align*}
			\partial_t \Phi &=  R_1\Big([ u\cdot \nabla,S_{\beta,2}](-\Delta)^{\frac{1-\beta}{2}}(R_2u_1-R_1u_2)\Big) -  R_2\Big([ u\cdot \nabla,S_{\beta,1}](-\Delta)^{\frac{1-\beta}{2}}(R_2u_1-R_1u_2)\Big)\\
			&\hspace{7.6cm} - R_1(u \cdot \nabla u_1)-R_2(u \cdot \nabla u_2).
		\end{align*}
		We consider these above terms on the right hand side separately. We have
		\begin{align*}
			R_1\Big([ u\cdot \nabla,S_{\beta,2}](-\Delta)^{\frac{1-\beta}{2}}(R_2u_1-R_1u_2)\Big)
			=& R_1 u_1  R_{2}^2  \partial_1 u_1 - R_1 S_{\beta,2} u_1 R_2 (-\Delta)^{\frac{1-\beta}{2}}\partial_1 u_1 \\
			& +R_1 u_2  R_{2}^2 \partial_2 u_1  -  R_1 S_{\beta,2} u_2 R_2 (-\Delta)^{\frac{1-\beta}{2}}\partial_2 u_1  \\
			& -R_1 u_1  R_{1} R_{2}  \partial_1 u_2 + R_1 S_{\beta,2} u_1 R_1 (-\Delta)^{\frac{1-\beta}{2}}\partial_1 u_2  \\
			& - R_1  u_2  R_{1} \mathcal R_{2} \partial_2 u_2 + R_1 S_{\beta,2} u_2 R_1 (-\Delta)^{\frac{1-\beta}{2}} \partial_2 u_2.
		\end{align*}

		Similarly we have
		\begin{align*}
			R_2\Big([ u\cdot \nabla,S_{\beta,1}](-\Delta)^{\frac{1-\beta}{2}}(R_2u_1-R_1u_2)\Big) 
			&=  R_2 u_1 R_{1} R_2 \partial_1 u_1  - S_{\beta,1} R_2 u_1 R_2 (-\Delta)^{\frac{1-\beta}{2}}\partial_1 u_1 \\
			& + R_2 u_2 R_{1} R_{2}\partial_2 u_1 - S_{\beta,1} R_2 u_2 R_2  (-\Delta)^{\frac{1-\beta}{2}}\partial_2 u_1 \\
			& - R_2 u_1 R_{1}^2 \partial_1 u_2 + S_{\beta,1} R_2 u_1 R_1 (-\Delta)^{\frac{1-\beta}{2}}\partial_1 u_2 \\
			& - R_2 u_2 R_{1}^2\partial_2 u_2 + S_{\beta,1}R_2 u_2 R_1 (-\Delta)^{\frac{1-\beta}{2}}\partial_2 u_2
		\end{align*}
		and
		\begin{align*}
			-R_1(u \cdot \nabla u_1)-R_2(u \cdot \nabla u_2) =	-R_1 u_1( -R_1^2-R_2^2) \partial_1 u_1 - R_1 u_2 (-R_1^2-R_2^2) \partial_2 u_1 \\
			-R_2 u_1(-R_1^2-R_2^2) \partial_1 u_2 - R_2 u_2 (-R_1^2-R_2^2) \partial_2 u_2, 
		\end{align*}
			
		
		since $R_1^2+R_2^2=-id$.

	Combining all these identities and rearranging the terms gives
		\begin{align}
			\partial_t \Phi 
			&=R_1( u_1 R_1^2 \partial_1 u_1) + R_1 (u_2 R_1^2 \partial_2 u_1) + R_1 (u_1 R_1 R_2 \partial_1 u_2 )+ R_1 (u_2 R_1 R_2 \partial_2 u_2)\\
			&+R_2 (u_1 R_1 R_2 \partial_1 u_1) + R_2 (u_2 R_1 R_2 \partial_2 u_1) + R_2 (u_1 R_2^2 \partial_1 u_2) + R_2 (u_2 R_2^2 \partial_2 u_2 )\nonumber \\
			&=R_1(u_1\cdot R_1\partial_1\Phi)+R_1(u_2\cdot R_1\partial_2\Phi)+R_2(u_1\cdot R_2\partial_1\Phi)+R_2(u_2\cdot R_2\partial_2\Phi).\nonumber
		\end{align}
	Therefore, multiplying by $\Phi$ and integrating by parts, yields

		\begin{align*}
		\frac{1}{2} \partial_t \|\Phi\|_{L^2}^2&=\big\langle R_1(u_1\cdot R_1\partial_1\Phi)+R_1(u_2\cdot R_1\partial_2\Phi)+R_2(u_1\cdot R_2\partial_1\Phi)+R_2(u_2\cdot R_2\partial_2\Phi),\Phi \big\rangle_{L^2}\\
		&=\sum\limits_{k=1}^{2}\int\limits R_k(u_1\cdot R_k\partial_1\Phi)\cdot \Phi\, dx+\sum\limits_{k=1}^{2}\int\limits R_k(u_2\cdot R_k\partial_2\Phi)\cdot \Phi\, dx\\
		&=\frac{1}{2}\sum\limits_{k=1}^2\int\limits \partial_1u_1 (R_k\Phi)^2 \, dx+\frac{1}{2}\sum\limits_{k=1}^2\int\limits \partial_2u_2 (R_k\Phi)^2 \, dx\\
		&\le C \|Du\|_{\infty}\Big(\|R_1\Phi\|_{L^2}^2+\|R_2\Phi\|_{L^2}^2  \Big)\\
		&\le C \sup\limits_{[0,T]}\|u(t)\|_{H^s}\|\Phi(t)\|_{L^2}^2
		\end{align*}
where in the last step we used the Sobolev lemma. Since $\Phi(0)=0$, applying Gronwall's inequality completes the proof.
	\end{proof}
The above arguments can be summarized as follows.
	\begin{corollary}\label{prop_equivalence}
	Let $s > 2$, $T > 0$ and $\theta_0 \in H^s$. If $\theta \in C([0,T];H^s(\mathbb R^2))$ is a solution to \eqref{eq:gSQG} with $\theta(0)=\theta_0$ then $u=\big(R_2(-\Delta)^{-\frac{1-\beta}{2}}\theta,-R_1(-\Delta)^{-\frac{1-\beta}{2}}\theta\big)$ is a solution to \eqref{rewriteeqn} on $[0,T]$. On the other hand, if $u \in C([0,T];H^s)$ is a solution to \eqref{rewriteeqn} with $u(0)=\big(R_2(-\Delta)^{-\frac{1-\beta}{2}}\theta_{0},-R_1(-\Delta)^{-\frac{1-\beta}{2}}\theta_{0}\big)$, then $\theta=(-\Delta)^{\frac{1-\beta}{2}}(R_1 u_2-R_2 u_1)$ is a solution to \eqref{eq:gSQG} on $[0,T]$.
\end{corollary}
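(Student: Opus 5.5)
The plan is to prove the two directions separately, using the derivation \eqref{gSQG1coor}--\eqref{rewriteeqn} for the forward implication and Lemma \ref{lemma_div} for the converse.

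\emph{Forward direction.} Let $\theta\in C([0,T];H^s)$ solve \eqref{eq:gSQG} and define $u=-S_\beta^\perp\theta$ by \eqref{eq:vel_comp}. Then:
\begin{enumerate}[(i)]
\item Applying $S_{\beta,2}$ and $-S_{\beta,1}$ to the transport equation and commuting them past $u\cdot\nabla$ gives \eqref{gSQG1coor}--\eqref{gSQG2coor}.
\item Identity \eqref{eq:theta-u} gives $\theta=(-\Delta)^{\frac{1-\beta}{2}}(R_1u_2-R_2u_1)$, which is exactly the expression to substitute into the commutators to obtain $q(u,u)$.
\item It remains to justify this in $C([0,T];H^s)$. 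Since $S_{\beta,k}$ has symbol homogeneous of degree $\beta-1\le 0$, I would check that the low frequencies cause no trouble. The operator $(-\Delta)^{-\frac{1-\beta}{2}}$ is unbounded at low frequencies on $L^2$ when $\beta<1$, so I would either assume (as the paper does implicitly) that $u$ makes sense in $H^s$, or work with $\dot H$-type spaces.
\item The commutator terms are then well defined, and $u$ is divergence free by construction, since $u=\nabla^\perp$ of a stream function.
\end{enumerate}

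\emph{Converse direction.} Let $u\in C([0,T];H^s)$ solve \eqref{rewriteeqn} with the stated initial datum.
\begin{enumerate}[(i)]
\item $\operatorname{div}u_0=0$ because $u_0=\nabla^\perp$ of a scalar. Lemma \ref{lemma_div} then gives $\operatorname{div}u(t)=0$, i.e.\ $\Phi=R_1u_1+R_2u_2\equiv0$.
\item Define $\theta=(-\Delta)^{\frac{1-\beta}{2}}(R_1u_2-R_2u_1)$. Because $\Phi\equiv 0$, Riesz-transform algebra with $R_1^2+R_2^2=-\mathrm{id}$ shows $u=-S_\beta^\perp\theta$. Concretely, $S_{\beta,2}\theta=R_2(R_1u_2-R_2u_1)=R_1(R_2u_2)-R_2^2u_1$, and using $R_2u_2=-R_1u_1$ this equals $-(R_1^2+R_2^2)u_1=u_1$; the second component is similar.
\item With this identification, \eqref{rewriteeqn} reduces to \eqref{gSQG1coor}--\eqref{gSQG2coor}. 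Applying $R_1$ and $R_2$ and combining, as in the computation preceding Lemma \ref{lemma_div}, gives $(-\Delta)^{-\frac{1-\beta}{2}}(\partial_t\theta+u\cdot\nabla\theta)=0$.
\item Injectivity of the Fourier multiplier, whose symbol is nonzero away from the origin, yields the transport equation in the distributional sense.
\end{enumerate}

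\emph{Main obstacle.} I expect the difficulty to be regularity bookkeeping rather than the algebra. The time derivative must be justified, which I would do through the integral form used in Lemma \ref{lemma_div}. The sign conventions in the commutator combination must also be tracked carefully. Finally, one must confirm that $\theta$ lies in $C([0,T];H^s)$: since $(-\Delta)^{\frac{1-\beta}{2}}$ costs $1-\beta$ derivatives, this requires tracking that $u$ is one order smoother than $\theta$ modulo $\beta$, consistent with the paper's convention that stream functions lie in $H^{s+1}$.
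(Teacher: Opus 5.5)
Your proposal is correct and follows essentially the paper's route. The forward direction is the commutator derivation \eqref{gSQG1coor}--\eqref{rewriteeqn}, and the converse uses Lemma \ref{lemma_div} to get $\Phi\equiv 0$, hence $u=-S_\beta^\perp\theta$, after which Riesz transforms give $(-\Delta)^{-\frac{1-\beta}{2}}(\partial_t\theta+u\cdot\nabla\theta)=0$. You make explicit the identification step that the paper leaves implicit when it invokes the lemma.

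One small remark: the converse does not require $\theta\in C([0,T];H^s)$. From $u\in C([0,T];H^s)$ alone, the defining formula only yields $\theta\in C([0,T];H^{s-1+\beta})$. That is already enough for the transport equation to make sense, since $s-1+\beta>1$.
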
	

Finally, we are ready to proceed with the Lagrangian formulation of the Cauchy problem \eqref{rewriteeqn}-\eqref{eq:IC}. To this end we employ the flow equation, namely
\begin{align} \label{eq:flow-eq} 
	\frac{d \gamma}{dt}(t,x) = u\big(t, \gamma(t,x)\big),\quad \gamma(0,x)=x
\end{align}
and rewrite the velocity equation \eqref{rewriteeqn} as a first order system on the tangent bundle $ T\mathcal{D}_{ex}^s$ of the group $\mathcal{D}_{ex}^s$, that is
\begin{align}\label{lagrange}
	\partial_t\left(\begin{matrix}
		\gamma\\
		\dot{\gamma}
	\end{matrix}\right)=\left(\begin{matrix}
		\dot{\gamma}\\
		q(\dot{\gamma}\circ \gamma^{-1},\dot{\gamma}\circ\gamma^{-1} )\circ \gamma
	\end{matrix}\right)=:F(\gamma, \dot{\gamma})
\end{align}
subject to the initial conditions
\begin{align}\label{eq:intial_lagr}
	\gamma(0)=e \quad \text{and} \quad 	\dot{\gamma}(0)= u_0
\end{align}
where $F:U \rightarrow H^s\times H^s$ is defined in some open neighborhood $U$ of the point $(e,0)$ in $ \mathcal{D}_{ex}^s\times H^s$. The flow $t\mapsto \gamma(t)$ of the velocity field $u$ describes of course the geodesic path in $\mathcal{D}_{ex}^s$ starting from $e$ in the direction $u_0$, see \eqref{eq:exp} above.

	\section{Local well-posedness}\label{section_lwp}
In this section we prove Theorem \ref{analytic} and Corollary \ref{lwp}. We will adapt the method used by Shnirelman \cite{Shn12}.  
For the definition and relevant constructions involving analytic maps in the infinite dimensional setting see the Appendix. 	
	\subsection{Proof of Theorem \ref{analytic}}
Given $u_0$ in $ T_e\mathcal{D}_{\mu, ex}^s$ let $\gamma(t)$ be the geodesic in the diffeomorphism group with $\gamma(0)=e$ and $\dot{\gamma}(0)=u_0$ and let $\theta_0$ be the corresponding initial condition in \eqref{eq:gSQG}. Observe that for any $t\ge 0$ we have the conservation law
 \begin{align}\label{eqn:conservation}
     \theta(t,\gamma(t,x))=\theta_0(x).
 \end{align}
From \eqref{eq:flow-eq}, using \eqref{eq:gSQG} and \eqref{eqn:conservation}, we obtain
 \begin{align}\label{eqn:flow_intermof_gamma}
    \frac{\partial \gamma(t,x)}{\partial t}=\left\{\mbox{curl}^{-1} \left(-\Delta\right)^{\frac{\beta}{2}}\big(\theta_0\circ \gamma^{-1}(t)\big)\right\}\circ \gamma(t,x),
 \end{align}
where $\mbox{curl}^{-1} =-\nabla^{\perp}(-\Delta)^{-1}$ is the inverse of the $\text{curl}$ operator on the space $H_0^s$ denote the space of Sobolev $H^s$ divergence free vector fields with mean zero.\footnote{ Note that $H_0^s$ corresponds to $T_e\mathcal{D}_{\mu, ex}^s$ but in the case of $\mathbb{R}^2$ we need appropriate vanishing conditions at infinity. }
 
We will construct a local chart near the identity and use Proposition \ref{prop: analytic_Shnirelman} and Proposition \ref{prop:AIFT_Shnirelman} to show that the exponential map is locally analytic between Banach spaces.
 \begin{lemma}
  Let $\varepsilon>0$ be sufficiently small. If $v\in H_0^{s}$ with $\|v\|_s<\varepsilon$ then there is a unique (up to an additive constant) function $\varphi_{v}\in H^{s+1}$ such that the map 
     \begin{align}
     	x\rightarrow \xi_v(x)=x+v(x)+\nabla \varphi_{v}(x)
     \end{align}
 is an exact volume-preserving $H^s$ diffeomorphism. Furthermore, the map $v\mapsto v+\nabla  \varphi_{v}$ is analytic  as a map from $H_0^s$ to $H^s$.
 \end{lemma}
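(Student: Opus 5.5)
We reduce the statement to the analytic implicit function theorem, Proposition \ref{prop:AIFT_Shnirelman}, applied to the Jacobian equation. For $v\in H_0^s$ and $\varphi\in H^{s+1}$ (modulo constants, with the decay conditions at infinity from the footnote) set
\begin{align*}
\Psi(v,\varphi) := \det\big(I + Dv + D^2\varphi\big) - 1 .
\end{align*}
Since we are in two dimensions, this is the polynomial
\begin{align*}
\Psi(v,\varphi)=\operatorname{div} v + \Delta\varphi + \det\big(Dv + D^2\varphi\big).
\end{align*}
Here $\operatorname{div} v=0$. The determinant is a quadratic expression in entries of $Dv, D^2\varphi\in H^{s-1}$, and $H^{s-1}$ is a Banach algebra because $s-1>1$. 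So $\Psi$ is a continuous polynomial map $H_0^s\times H^{s+1}\to H^{s-1}$. By Proposition \ref{prop: analytic_Shnirelman}, or directly from the definition in the Appendix, it is therefore analytic. We also have $\Psi(0,0)=0$.

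The partial derivative at the origin is $D_\varphi\Psi(0,0)=\Delta$. The key step is to choose function spaces in which $\Delta$ is an isomorphism. On $\mathbb{R}^2$ this requires care at low frequencies. My first choice is to take $\varphi$ in the homogeneous-type space $\{\varphi: \nabla\varphi\in H^s\}$ modulo constants, and to take the target to be the image $\Delta(\cdot)$, i.e. the closed subspace $\{f\in H^{s-1}: f=\operatorname{div} g,\ g\in H^s\}$ with the quotient norm. I then need to check that $\Psi$ takes values in that target. This holds because in 2D the Jacobian term is a null form:
\begin{align*}
\det(Dw)=\partial_1(w_1\partial_2 w_2)-\partial_2(w_1\partial_1 w_2),
\end{align*}
which is a divergence of $H^s$ terms when $w=v+\nabla\varphi\in H^s$. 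With this setup, Proposition \ref{prop:AIFT_Shnirelman} gives $\varepsilon>0$ and a unique analytic map $v\mapsto\varphi_v$ near $0$ solving $\Psi(v,\varphi_v)=0$. Consequently $v\mapsto v+\nabla\varphi_v$ is analytic into $H^s$, since it is composed with the bounded map $\nabla$. Uniqueness up to a constant comes from the uniqueness in the implicit function theorem together with the quotient by constants.

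It remains to show that $\xi_v$ is an exact volume-preserving $H^s$ diffeomorphism. For $\varepsilon$ small, the Sobolev embedding $H^s\subset C^1$ gives $\|Dv+D^2\varphi_v\|_{\infty}\ll 1$. Hence $\xi_v-e$ is a small $C^1$ perturbation of the identity, and $\xi_v$ is a $C^1$ diffeomorphism of $\mathbb{R}^2$: it is injective by the mean value estimate and surjective by properness and degree. Because $\xi_v-e\in H^s$, it is an $H^s$ diffeomorphism. Its Jacobian equals $1$ by construction. Exactness (the flux homomorphism vanishing) follows on $\mathbb{R}^2$ from simple connectivity, or, by continuity, from connecting $\xi_v$ to $e$ through $\xi_{\tau v}$ for $\tau\in[0,1]$.

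I expect the main obstacle to be the functional-analytic setting on $\mathbb{R}^2$: making $\Delta$ invertible between the chosen spaces and confirming that $\Psi$ maps into its range. If the divergence-form target proves awkward, a fallback is to pose the equation for $\nabla\varphi$ directly, as $\nabla\varphi=\nabla\Delta^{-1}\big(-\det(Dv+D^2\varphi)\big)$. This is a fixed-point problem in $H^s$ gradients, where $\nabla\Delta^{-1}\operatorname{div}$ is bounded thanks to the divergence structure, and analyticity of the solution then follows from the contraction mapping applied in the analytic category.
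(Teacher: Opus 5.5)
Your argument is the paper's in spirit, since both solve $\det(I+Dv+D^2\varphi)=1$ (whose linearization in $\varphi$ at the origin is $\Delta$) and get analyticity from Proposition \ref{prop:AIFT_Shnirelman}. The execution differs in a way worth recording.

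\textbf{Comparison with the paper.} The paper recasts the equation as the fixed-point problem $\varphi=\Delta^{-1}P(Dv,D^2\varphi)$. It runs a contraction in a small ball of $H^{s+1}$ for existence and uniqueness, and invokes the implicit function theorem only for analyticity. You get existence, local uniqueness and analyticity in one step, so the contraction becomes redundant.

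More substantively, the paper's bounds \eqref{eqn:bound_Q}--\eqref{eqn:bound_nabla_Q} treat $\Delta^{-1}$ as bounded from $H^{s-1}$ to $H^{s+1}$. That holds on a torus for mean-zero functions, but not on $\mathbb{R}^2$, because of the low frequencies. Your choice $X=\{\varphi:\nabla\varphi\in H^s\}/\mathbb{R}$ and $Y=\operatorname{div}(H^s)$ with the quotient norm, together with the 2D null structure of the Jacobian, is what makes $\Delta:X\to Y$ an isomorphism. It is in fact an isometry, since the Hodge splitting is $H^s$-orthogonal. This is what makes the argument honest on $\mathbb{R}^2$.

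The price is that you obtain $\nabla\varphi_v\in H^s$ rather than $\varphi_v\in H^{s+1}$ as literally stated. That is all that is needed for $\xi_v-e\in H^s$ and for analyticity of $v\mapsto v+\nabla\varphi_v$. You also verify the diffeomorphism and exactness properties, which the paper leaves implicit.

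\textbf{Small corrections.} The fluxes $w_1\partial_2 w_2$ and $w_1\partial_1 w_2$ lie only in $H^{s-1}$, not in $H^s$, so ``divergence of $H^s$ terms'' is not right as stated. What you need is $\|f\|_Y=\|\nabla\Delta^{-1}f\|_{H^s}\simeq\|f\|_{H^{s-1}}+\|f\|_{\dot H^{-1}}$. With that norm,
\begin{align*}
\|\det(Dw)\|_Y\lesssim\|Dw\|_{H^{s-1}}^2+\|w\|_{L^\infty}\|Dw\|_{L^2}\lesssim\|w\|_{H^s}^2 .
\end{align*}
The first term uses the algebra property of $H^{s-1}$ for high frequencies. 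The second uses $\|\operatorname{div}G\|_{\dot H^{-1}}\le\|G\|_{L^2}$ for low frequencies.

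Also, $Y$ is not a closed subspace of $H^{s-1}$; it is dense there. It is, however, a Banach space in the quotient norm, which is all the implicit function theorem requires.

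Finally, Proposition \ref{prop: analytic_Shnirelman} (the ODE theorem) plays no role in the analyticity of $\Psi$. A bounded polynomial map is analytic directly from the definition, after complexifying the spaces, a step both you and the paper leave implicit.
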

\begin{proof}
Since  the map $x\mapsto x+v(x)$ does not in general preserve the volume form $\mu$, we need to adjust it by adding a gradient term $\nabla \varphi_{v}(x)$ which can be done with the help of the Hodge decomposition.
    Call the adjusted map $\xi_v=e+v+\nabla \varphi_{v}$ satisfying $\det (J (\xi_v))=1$. So $\xi_v$ is an element of the subgroup $\mathcal{D}_{\mu, ex}^s$ near the identity and we have
     \begin{align}\label{eqn:det_der_gamma_v}
         \det\begin{pmatrix}
             1+\dfrac{\partial v_1}{\partial x_1}+ \dfrac{\partial^2 \varphi_{v}}{\partial x_1^2} &  \dfrac{\partial v_1}{\partial x_2}+ \dfrac{\partial^2 \varphi_{v}}{\partial x_1 \partial x_2}\\
             \dfrac{\partial v_2}{\partial x_1}+ \dfrac{\partial^2 \varphi_{v}}{\partial x_2 \partial x_1} &  1+\dfrac{\partial v_2}{\partial x_2}+ \dfrac{\partial^2 \varphi_{v}}{\partial x_2^2}
         \end{pmatrix}=1.
     \end{align}

      Expanding the \eqref{eqn:det_der_gamma_v} and rewriting it we obtain an equation for the Laplacian of $\varphi_v$, that is
      \begin{align}\label{eqn:contraction_map}
      	\Delta\varphi_{v}= P(D v,D^2\varphi_{v}).
      \end{align}
  where $P$ is a polynomial of degree $2$ in $Dv$ and $D^2\varphi$, namely 
 \begin{align}
    P\big(D v,D^2\varphi\big)= -\bigg(\frac{\partial v_1}{\partial x_1}+ \frac{\partial v_2}{\partial x_2}
     &+ \frac{\partial v_1}{\partial x_1} \frac{\partial v_2}{\partial x_2}+ \frac{\partial v_1}{\partial x_1} \frac{\partial^2 \varphi}{\partial x_2^2}+ \frac{\partial v_2}{\partial x_2} \frac{\partial^2 \varphi}{\partial x_1^2} + \frac{\partial^2\varphi}{\partial x_1^2}\frac{\partial^2\varphi}{\partial x_2^2} \bigg)\notag\\
     &+  \frac{\partial v_2}{\partial x_1} \frac{\partial v_1}{\partial x_2}  + \frac{\partial v_2}{\partial x_1}  \frac{\partial^2 \varphi}{\partial x_1 \partial x_2}+ \frac{\partial v_1}{\partial x_2} \frac{\partial^2 \varphi}{\partial x_2 \partial x_1} + \bigg(\frac{\partial^2 \varphi}{\partial x_1 \partial x_2}\bigg)^2.
 \end{align}
 Let $Q(v,\varphi_{v})=  	\Delta^{-1} P(D v,D^2\varphi_{v})$.
Observe that $Q$ is analytic in both variables as a map from $H_0^s\times H^{s+1}$ to $H^{s+1}$ and, moreover, $Q$ and its G\^ateaux derivatives satisfy the inequalities
\begin{align}\label{eqn:bound_Q}
    &\|Q(v,\varphi)\|_{s+1}\le C\big(\|v\|_s^2+\|\varphi\|_{s+1}^2\big) 
\end{align}
and
\begin{align}\label{eqn:bound_nabla_Q}
    &\|\partial_vQ(v,\varphi)\|_{s+1}+\|\partial_{\varphi} Q(v,\varphi)\|_{s+1}\le C(\|v\|_s+\|\varphi\|_{s+1}),
\end{align}
with some constant $C>0$. 

Let $\varepsilon<(2C)^{-1}$. For any $v$ in $H_0^s$ with $\|v\|_{s}<\varepsilon$, if $\|\varphi\|_{s+1}\le \varepsilon$  then from \eqref{eqn:bound_Q} we have 
 \begin{align}
    	\|Q(v,\varphi)\|_{s+1}\le C(\varepsilon^2+\varepsilon^2)=2C\varepsilon^2< \varepsilon
    \end{align}
which implies that $\varphi \rightarrow Q(v,\varphi)$ maps the $\varepsilon$-ball $B_{\varepsilon}^{s+1}$ centered at the origin in $H^{s+1}$ to itself. 
   
Furthermore, from \eqref{eqn:bound_nabla_Q}, for any $\varphi_1,\varphi_2$ in $B_{\varepsilon}^{s+1}$ we have 
    \begin{align}
    \|Q(v,\varphi_1)-Q(v,\varphi_2)\|_{s+1}&\le \max_{0\le \lambda\le 1}\|\nabla Q\big(v,\lambda\varphi_1+(1-\lambda)\varphi_2\big)\| \|\varphi_1-\varphi_2\|_{s+1}\notag\\
    &\le 2C\varepsilon \|\varphi_1-\varphi_2\| _{s+1}\\
    &<  \|\varphi_1-\varphi_2\| _{s+1}.\notag
\end{align}
    Thefore, there is a ball of sufficiently small radius such that for any $v$ belongs to this ball, there exists a unique solution $\varphi=\varphi_v$ of \eqref{eqn:contraction_map} by the contraction mapping argument. 
    
    Since $Q$ is analytic, by the Analytic Implicit Function Theorem \ref{prop:AIFT_Shnirelman}, there exists a unique analytic solution $\varphi_v$ which shows that the map $v\mapsto\xi_v-e$ is analytic  as a map from $H_0^s$ to $H^s$.

 \end{proof}
This lemma defines a local analytic chart at the identity $e$ in $\mathcal{D}_{\mu,ex}^{s}$, namely
$$U\ni v\mapsto \xi_v=e+v+\nabla\varphi_v\in\mathcal{U}$$
where $U$ and $\mathcal{U}$ are open neighborhood of $0$ in $T_e\mathcal{D}_{\mu,ex}^s\simeq H_0^s$ and of $e$ in $\mathcal{D}_{\mu,ex}^{s}$.
 
Next, we rewrite the equation \eqref{eqn:flow_intermof_gamma}.
	First, consider the space 
    $$H_{0,\gamma}^s:=\Big\{ U=u\circ \gamma: u\in H^s,\; \text{div}\,u=0\;\; \text{and}\; \int_{M}u\,d\mu=0\Big\},$$
    which is the right translation of $H_0^s$ by $\gamma\in \mathcal{D}_{\mu,ex}^s.$
To see that $H_{0,\gamma}^s$ depends analytically on $\gamma$ we proceed as follows. First, observe that by the change of variables formula and the fact that $\gamma$ preserves the volume form $\mu$ we have
\begin{align}
   0= \int_{M}u\,d\mu=\int_{\gamma(M)}\gamma^\ast(U\circ\gamma^{-1}\,d\mu)=\int_{M}U\,d\mu.\notag
\end{align}
Next, we have
	\begin{align}\label{eq:divUcircgamma}
		0=\text{div}\,u&=\text{div} \,(U\circ \gamma^{-1})=\sum_{k} \partial_{k} (U^k\circ\gamma^{-1})\notag\\
        &=\sum_{k,l}^{}\partial_lU^{k}\circ \gamma^{-1}\cdot\partial_k(\gamma^{-1})^{l}
        =:L_\gamma(U).
	\end{align}	
   Observe that $L_\gamma$ defined by the right hand side of \eqref{eq:divUcircgamma} is a linear first order differential operator in $U$. Since $\gamma^{-1}\circ\gamma=e$ we have $D\gamma^{-1}\circ\gamma \cdot D\gamma=Id$ and since $\det (D\gamma)=1$ the coefficients of $L_\gamma$ depend linearly on the entries of $D\gamma$. Furthermore, the composition with $\gamma^{-1}$ is an analytic map (see e.g. Whittlesey \cite{Whittlesey1965}). 
   
   We can now describe the space $H_{0,\gamma}^s$ in terms of $U$ and $\gamma$ as
       \begin{align}
            H_{0,\gamma}^s:=\Big\{ U: L_\gamma(U)=0 \text{ and }\int U \,d\mu=0 \Big\}
    \end{align}
    and $L_\gamma$ depends on $\gamma $ analytically it follows that the dependence of the subspace $H_{0,\gamma}^s$ on $\gamma$ is analytic as well.  
    
    Next, using the conservation law \eqref{eqn:conservation} and the equation expressing the drift velocity $u$ in terms of $\theta$ in \eqref{eq:gSQG} we have
    \begin{align}\label{e:Gamma_gam}
    U=u\circ \gamma=\big\{\text{curl}^{-1}\,\left(-\Delta\right)^{\frac{\beta}{2}}(\theta_0\circ \gamma^{-1})\big\}\circ\gamma=: F(\gamma,\theta_0)
    \end{align}
       and we observe that the operator $F(\gamma,\theta_0)$ defined by the right hand side of \eqref{e:Gamma_gam} depends analytically on both $\gamma$ and $\theta_0$.
       The analyticity of $F(\gamma, \theta_0)$ in $ \gamma $ follows from the inclusion $ F(\gamma, \theta_0) \in H_{0,\gamma}^s$, which ensures that the inversion mapping $\gamma \mapsto \gamma^{-1}$ is analytic. Consequently, the composition $\gamma \mapsto \theta_0 \circ \gamma^{-1}$ is also analytic. Alternatively, we can argue by calculating the G\^ateaux derivative of the map $\gamma\mapsto \theta_0\circ \gamma$ from $\mathcal{D}_{\mu,ex}^s\rightarrow H^{s-1}$
\begin{align}
    \frac{d}{dt}(\theta_0\circ (\gamma+th)^{-1})\Big|_{t=0}&=\nabla\theta_0((\gamma+th)^{-1})\cdot \frac{d}{dt} (\gamma+th)^{-1}\Big|_{t=0}\\
    &=-\nabla\theta_0(\gamma^{-1})\cdot(D\gamma^{-1}\cdot h\circ\gamma^{-1})
\end{align}
and observe that we have
\[
\left\| \nabla \theta_0 \circ \gamma^{-1} \cdot \left[ D \gamma^{-1} \cdot h\circ\gamma^{-1} \right] \right\|_{H^{s-1}}
\leq C \left\|  \theta_0 \right\|_{H^{s-1}} \left\| \gamma \right\|_{H^{s}}.
\]
       As a result, analyticity is preserved under the linear operator $ \text{curl}^{-1} \left(-\Delta\right)^{\frac{\beta}{2}}$ and the right composition with $\gamma$.

		Our task is thus equivalent to showing that the solution of the following system in $\mathcal{D}_{\mu,ex}^s\times H^{s-1+\beta}$ depends analytically on $u_0$
		\begin{align}\label{e:gammaODE}
        	\begin{cases}
		&\dfrac{\partial \gamma}{\partial t}=F(\gamma,\theta_0)\\
        & \gamma(0)=e
        \end{cases}
		\end{align}
        where $F: \mathcal{D}_{\mu,ex}^s\times H^{s-1+\beta}\rightarrow  H^{s-1+\beta}$ is analytic in both variables. Applying Proposition \ref{prop: analytic_Shnirelman} in Appendix we obtain the solution $\gamma(t)=\exp_{e}^\beta tu_0$ with $0\le t\le T$ for some $T>0$ and $u_0=-\nabla^{\perp}(-\Delta)^{-1+\frac{\beta}{2}}\theta_0$, which depends analytically on both $t$ and $\theta_0$. This completes the proof of Theorem \ref{analytic}.

	\subsection{Proof of Corollary \ref{lwp}}

We start with the following lemma

\begin{lemma}\label{lemma_lagrangian_to_eulerian}
	Let $s >2$ and $T > 0$. Assume that $\gamma$ is a solution of \eqref{lagrange} on $[0,T]$ for the initial values $\gamma(0)=e$ and $ 	\dot{\gamma}(0)= u_0\in H^s$. Then $u$ given by
	\[
	u(t):=\dot{\gamma}(t) \circ \gamma(t)^{-1}
	\] 
	is a solution to \eqref{rewriteeqn}.
\end{lemma}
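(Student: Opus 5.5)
The plan is to differentiate the defining relation $\dot\gamma(t)=u(t)\circ\gamma(t)$ in time and read off \eqref{rewriteeqn} from the second component of \eqref{lagrange}, after dealing with regularity.

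\textbf{Step 1: regularity of $u$.} Since $\gamma\in C^1([0,T];\mathcal D^s_{ex})$ solves \eqref{lagrange}, we have $\dot\gamma\in C^1([0,T];H^s)$, and $\gamma(t)$ is an $H^s$ diffeomorphism for each $t$. By the continuity of composition and inversion on $\mathcal D^s$ (topological group structure, $s>2$), $u(t)=\dot\gamma(t)\circ\gamma(t)^{-1}$ lies in $C([0,T];H^s)$ and $u(0)=u_0$. Composition is only $C^1$ with a loss of one derivative, so I will show $u\in C^1([0,T];H^{s-1})$. For this I use that $(v,\eta)\mapsto v\circ\eta$ is $C^1$ as a map $H^s\times\mathcal D^s\to H^{s-1}$, and similarly for $\eta\mapsto\eta^{-1}$ as a map into $\mathcal D^{s-1}$.

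\textbf{Step 2: the chain rule.} Write $\dot\gamma=u\circ\gamma$ and differentiate in $t$ in $H^{s-1}$:
\[
\ddot\gamma=\partial_t u\circ\gamma+(Du\circ\gamma)\,\dot\gamma=\big(\partial_t u+(u\cdot\nabla)u\big)\circ\gamma .
\]
The last equality uses $\dot\gamma=u\circ\gamma$. On the other hand, the second line of \eqref{lagrange} gives
\[
\ddot\gamma=q(\dot\gamma\circ\gamma^{-1},\dot\gamma\circ\gamma^{-1})\circ\gamma=q(u,u)\circ\gamma .
\]
Composing both sides with $\gamma^{-1}$ yields $\partial_t u+u\cdot\nabla u=q(u,u)$ in $H^{s-1}$ for every $t$. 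This is \eqref{rewriteeqn} in the sense used in Lemma \ref{lemma_div}, namely the integral form with values in $C([0,T];H^s)$. Here $q(u,u)\in H^s$ because the commutator structure of $q$ gains a derivative, which was already implicit in $F$ mapping into $H^s\times H^s$. Since $\gamma\in\mathcal D_{ex}^s$, we also have $u(t)\in T_e\mathcal D^s_{ex}$, so $u$ stays divergence free. Alternatively, this follows from Lemma \ref{lemma_div} once $\operatorname{div}u_0=0$.

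\textbf{Main obstacle.} The main obstacle is justifying the chain rule in Step 2, since composition $v\mapsto v\circ\eta$ is not differentiable in $\eta$ on $H^s$ without a derivative loss. I would handle this by working in $H^{s-1}$ ($s-1>1$, so $H^{s-1}$ is still an algebra and embeds in $C^0$), where the difference quotients converge by the standard $\omega$-lemma arguments. If needed, I would first verify the identity for smooth approximations of $\dot\gamma,\gamma$ and pass to the limit using continuity of $q$ on $H^s$.
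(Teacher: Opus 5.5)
Your proposal is correct and takes the same route as the paper: differentiate $\dot\gamma=u\circ\gamma$ in $t$, identify $\ddot\gamma$ with $q(u,u)\circ\gamma$ using the second component of \eqref{lagrange}, compose with $\gamma^{-1}$, and pass to the integral form in $H^{s-1}$. The paper justifies the chain rule pointwise, via Sobolev embedding giving $u\in C^1([0,T]\times\mathbb{R}^2)$, whereas you work in $H^{s-1}$. In that setting the difference-quotient splitting from your last paragraph is genuinely needed, not just a formal chain rule through the $C^1$ composition map on $H^s\times\mathcal{D}^s$, because $t\mapsto\gamma(t)^{-1}$ is $C^1$ only with values in $H^{s-1}$, not in $\mathcal{D}^s$.
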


\begin{proof}
	We need to prove
	\[
	u(t) = u_0 + \int_0^t q(u(s),u(s)) - (u(s) \cdot \nabla) u(s) \;ds, \quad \forall \; 0 \leq t \leq T.
	\]
	Note that by Theorem \ref{analytic} we have $\gamma \in C^\infty([0,T];\mathcal{D}_{\mu, ex}^s)$. Therefore by Sobolev's imbedding lemma and the properties of the composition $u =\dot{\gamma} \circ \gamma^{-1}  \in C^1([0,T] \times \mathbb R^2;\mathbb R^2)$ (see e.g. \cite{Inci_K_T2013}).  Thus we have pointwise
	\[
	\ddot{\gamma} = \big(u_t + (u \cdot \nabla) u \big) \circ 	\gamma
	\]
	And from \eqref{lagrange} we conclude 
	\[
	(u_t + (u \cdot \nabla) u) \circ 	\gamma = q(u,u) \circ \gamma
	\]
	or composing on the right with $\gamma^{-1}$ we have
    \[u_t + (u \cdot \nabla) u = q(u,u).\]
    Rewriting this last equation in the integral form we get
	\[
	u(t) = u_0 + \int_0^t q(u(s),u(s))-(u(s) \cdot \nabla) u(s) \;ds \qquad 
	\]
	for any $ 0 \leq t \leq T$. Since $s>2$ by the algebra property of $H^{s-1}$ and Sobolev's imbedding lemma, the result follows.
\end{proof}

\begin{lemma}\label{lemma_eulerian_to_lagrangian}
	Let $s >  2$ and $T > 0$. If $u \in C([0,T];H^s)$ is a solution to \eqref{rewriteeqn} then its flow $t\mapsto\gamma(t)$ is a solution to \eqref{lagrange}.
\end{lemma}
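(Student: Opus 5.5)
To prove Lemma \ref{lemma_eulerian_to_lagrangian} I would reverse the argument of Lemma \ref{lemma_lagrangian_to_eulerian}. Given $u\in C([0,T];H^s)$ solving \eqref{rewriteeqn}, Lemma \ref{lemma_div} shows that $\operatorname{div}u(t)=0$ for all $t$. The first task is to construct the flow $\gamma$ of \eqref{eq:flow-eq} and show that it is a curve in $\mathcal{D}_{ex}^s$. I would invoke the standard result for the flow equation with an $H^s$ vector field, $s>2$ (see e.g. \cite{Inci_K_T2013}): if $u\in C([0,T];H^s)$, then the flow exists on $[0,T]$, satisfies $\gamma\in C^1([0,T];\mathcal{D}^s)$, and has $\dot\gamma=u\circ\gamma$. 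Because $u$ is divergence free, Liouville's formula $\partial_t\det D\gamma=(\operatorname{div}u)\circ\gamma\,\det D\gamma$ gives $\det D\gamma\equiv1$. Exactness follows because the vector field $u=-\nabla^\perp(-\Delta)^{-1+\beta/2}\theta$ has a single-valued stream function, so $\gamma(t)$ stays in $\mathcal{D}_{ex}^s$.

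Next I would verify the second component of \eqref{lagrange}. I would set $v(t):=\dot\gamma(t)=u(t)\circ\gamma(t)$ and show that $v\in C^1([0,T];H^s)$ with $\dot v=q(u,u)\circ\gamma$. Formally, the chain rule gives $\dot v=(u_t+(u\cdot\nabla)u)\circ\gamma$, and \eqref{rewriteeqn} turns this into $q(u,u)\circ\gamma$. Writing $u=\dot\gamma\circ\gamma^{-1}$ then yields exactly $\partial_t\dot\gamma=q(\dot\gamma\circ\gamma^{-1},\dot\gamma\circ\gamma^{-1})\circ\gamma$.

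The main obstacle is regularity: $u_t$ lies only in $H^{s-1}$, so the chain rule cannot be applied naively in $H^s$. To handle this, I would work in integral form. First I would establish the identity
\[
u(t)\circ\gamma(t)=u_0+\int_0^t q(u(\tau),u(\tau))\circ\gamma(\tau)\,d\tau
\]
in $H^{s-1}$, where the computation is legitimate: $u\in C^1([0,T];H^{s-1})$, composition $H^{s-1}\times\mathcal{D}^s\to H^{s-1}$ is continuous, and $C^1$ in the appropriate sense, so the pointwise chain rule argument used in Lemma \ref{lemma_lagrangian_to_eulerian} applies. Then I would check that the integrand $q(u,u)\circ\gamma$ is in fact continuous into $H^s$. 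This holds because $q(u,u)$ is a commutator $[u\cdot\nabla,S_{\beta,k}]$ applied to $(-\Delta)^{\frac{1-\beta}{2}}$ of a Riesz combination of $u$, hence of order $1-\beta+$(commutator gain). A Kato–Ponce type commutator estimate gives $\|q(u,u)\|_{H^s}\lesssim\|u\|_{H^s}^2$ for $0\le\beta\le1$, and this is the step I expect to be most delicate. Since $q(u,u)\in C([0,T];H^s)$ and $\gamma\in C([0,T];\mathcal{D}^s)$, the integrand lies in $C([0,T];H^s)$. The identity therefore holds in $H^s$, giving $\dot\gamma\in C^1([0,T];H^s)$ and establishing that $(\gamma,\dot\gamma)$ solves \eqref{lagrange} with \eqref{eq:intial_lagr}.
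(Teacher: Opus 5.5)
Your proposal is correct and follows the paper's route. You take the flow of $u$, use the fact that $u$ is $C^1$ in $(t,x)$ to get $\ddot\gamma=(u_t+(u\cdot\nabla)u)\circ\gamma=q(u,u)\circ\gamma$, and pass to the integral form $\dot\gamma(t)=u_0+\int_0^t q(u,u)\circ\gamma\,d\tau$. You are more careful than the paper on two points: you justify that this identity holds in $H^s$ through $q(u,u)\in C([0,T];H^s)$ and continuity of composition, and you show $\gamma$ is volume preserving through Lemma~\ref{lemma_div} and Liouville's formula; as in the paper, the latter tacitly requires $\operatorname{div}u_0=0$.
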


\begin{proof}
	We know that given $u$ there is a unique $	\gamma \in C^1([0,T];\mathcal{D}_{\mu, ex}^s)$ with
	\begin{align}\label{e:flow}
	\dot{\gamma} = u \circ 	\gamma\quad 	\text{with}\quad\gamma(0)=e.
	\end{align}
	From the integral relation $u(t)=u_0 + \int_0^t \big(q(u,u)-(u \cdot \nabla) u\big) \;ds$ we see that $u \in C^1([0,T] \times \mathbb R^2;\mathbb R^2)$. Taking the derivative in $t$ of the differential equation in \eqref{e:flow} we get pointwise
	\[
	\ddot{\gamma} = \big(u_t + (u \cdot \nabla)u\big) \circ 	\gamma = q(u,u) \circ 	\gamma
	\]
	and consequently
	\[
	\gamma(t) = 	\dot{\gamma}(0) + \int_0^t q(	\dot{\gamma} \circ 	\gamma^{-1},	\dot{\gamma} \circ 	\gamma^{-1}) \circ \gamma \; ds \quad \forall \; 0 \leq t \leq T.
	\]
	 Therefore $	\gamma \in C^1([0,T];\mathcal{D}_{\mu,ex}^s)$ and $\gamma$ is a solution to \eqref{lagrange}.
\end{proof}

Recall from previous sections that solutions of \eqref{lagrange} with initial conditions $\gamma(0)=e$ and $\dot{\gamma}(0)= u_0 \in H^s$ can be described by the Riemannian exponential map on $\mathcal{D}_{\mu,ex}^s$ (see \eqref{eq:beta-met} and \eqref{eq:exp}). From Theorem \ref{analytic} we know that $\exp_e^{\beta}$ is real analytic. 


	\begin{proof}[Proof of Corollary \ref{lwp}]
		Take $\theta_0 \in H^s(\mathbb R^2)$ and for $u_0=(S_{\beta,2}\theta_{0},-S_{\beta,1}\theta_{0})$ let
		\[
		\gamma(t)=\exp_e^{\beta}(t u_0) \quad \mbox{and} \quad u(t)=	\dot{\gamma}(t) \circ 	\gamma(t)^{-1}
		\]
		for $0\le t \le T$ where $T>0$ comes from the Fundamental theorem of ODE as in the previous proof. Since $\gamma(t)\in \mathcal{D}_{\mu,ex}^s$ where $s>2$, we know that $\gamma(t)$ is a curve of $C^1$ diffeomorphisms by the Sobolev embedding theorem. Therefore, by properties of the composition map -- see e.g. \cite{EbMa}, \cite{Inci_K_T2013} -- this implies that $u \in C([0,T];H^s)$ for some $T > 0$. 
        
        Next, set
		\[
		\theta(t) = (-\Delta)^{\frac{1-\beta}{2}}\Big( R_1 u_2(t)-R_2 u_1(t) \Big)
		\]
		and observe that $\theta$ solves \eqref{eq:gSQG} by Lemma \ref{lemma_lagrangian_to_eulerian} and Corollary \ref{prop_equivalence}. By Theorem \ref{analytic}, $\exp_{e}^{\beta}tu_0$ is analytic in $u_0$, hence (locally) Lipschitz continuous.
        In addition, the map $\theta_0\mapsto u_0=-\nabla^{\perp}(-\Delta)^{-1+\frac{\beta}{2}}\theta_0$ is linear and bounded in $H^s$.
        Combining with the continuity of the composition map $\theta_0\mapsto \theta_0\circ \gamma^{-1}$, we obtain that the dependence on the initial condition $\theta_0$ is continuous.
        
        Uniqueness of solutions follows directly from the Fundamental ODE theorem. Indeed, suppose two solutions $\theta_1(t)$ and $\theta_2(t)$ exists for the same initial data $\theta_0$. The associated Lagrangian flows $\gamma_1(t)$ and $\gamma_2(t)$ must satisfy $\gamma_1(t)=\gamma_2(t)$ for all time as long as they defined due to uniqueness of geodesics in Theorem \ref{analytic}. Hence, $\theta_1(t)=\theta_0(\gamma_1^{-1}(t))=\theta_0(\gamma_2^{-1}(t))=\theta_2(t).$ 
	\end{proof}

\begin{remark}
    Alternatively, we can prove Corollary \ref{lwp} by directly showing that the right hand side $F(\gamma,\theta_0)$ is locally Lipschitz with respect to both variables $\gamma$ and $\theta_0$. This can be done by showing that $F(\gamma,\theta_0)$ has a bounded G\^ateaux derivative in the open neighborhood $\mathcal{U}$ of the identity $e$ in $\mathcal{D}_{\mu,ex}^s$ and then apply the mean value estimate to deduce the desired property of $F(\gamma,\theta_0)$.

\end{remark}

\begin{proof}[Proof of Corollary \ref{cor:locDiff}]
As before let $\gamma(t)=\exp_e^{\beta}tu_0$ with $u_0\in T_eD_{\mu,ex}^s$. A standard calculation (as in finite-dimensional Riemannian geometry) gives 
\begin{align}\label{e:dexp}
    d\exp_e^{\beta}(0)u_0=\dot\gamma(0)=u_0
\end{align}
which combined with the Inverse Function Theorem yields the result.
\end{proof}

\section{Non-uniform dependence of the solution map for $0\le \beta\le 1$.}\label{section_nonuniform}
In the following we fix the local existence time (see e.g., Corollary \ref{lwp}) and let 
$\mathcal{O} \subset H^s(\mathbb{R}^2)$ 
denote the set of those initial conditions $\theta_0$ for which the solution of \eqref{eq:gSQG} exists at least up to time $T=1$. 
To prove Theorem \ref{nonuniform} we use 
the following lemma. 

%
\begin{lemma} \label{lemma_dense} 
There is a dense subset 
$\mathcal{S} \subseteq \mathcal{O}$ 
consisting of functions with compact support 
such that for each function 
$\theta_0 \in \mathcal{S}$ 
we can find $x_{*} \in \mathbb R^2$ 
and $\theta_{*} \in H^s(\mathbb R^2)$ 
satisfying 
\begin{align}\label{e:2cond} 
B(x_*,2)\cap \mathrm{supp}\, \theta_0 = \emptyset	
\quad \text{and} \quad 	
\big( d\exp_{e}^{_{\beta}}(u_0)w_{*}\big)(x_*)\neq 0
\end{align}
where 
$$
u_0 
= 
\Big(S_{\beta,2}\theta_0,-S_{\beta,1}\theta_0\Big) 
\quad \text{and}\quad 
w_{*}=\Big(S_{\beta,2}\theta_*,-S_{\beta,1}\theta_*\Big) 
$$ 
and $exp_{e}^{_{\beta}}$ is the Riemannian exponential map in \eqref{eq:exp}. 
\end{lemma}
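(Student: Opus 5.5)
The plan follows the Bourgain--Li / Inci strategy: exploit the analyticity of $\exp_e^{\beta}$ (Theorem~\ref{analytic}) together with a density argument.

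\textbf{Choice of $\mathcal S$.} Let $\mathcal S$ be the set of compactly supported $\theta_0\in\mathcal O$, e.g.\ smooth with compact support. This set is dense in $\mathcal O$. Indeed, $\mathcal O$ is open in $H^s$, since the existence time depends lower semicontinuously on the data by Corollary~\ref{lwp}, and $C_c^\infty$ is dense in $H^s(\mathbb R^2)$. Given $\theta_0\in\mathcal S$, the support is bounded, so the first condition in \eqref{e:2cond} holds for every $x_*$ with $|x_*|$ large. It therefore remains to find such an $x_*$ and some $\theta_*$ with $\big(d\exp_e^{\beta}(u_0)w_*\big)(x_*)\neq0$.

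\textbf{Argument by contradiction.} Suppose, to the contrary, that $\big(d\exp_e^{\beta}(u_0)w\big)(x)=0$ for every $x$ outside a large ball $B(0,R)$ and every $w$ of the form $(S_{\beta,2}\theta_*,-S_{\beta,1}\theta_*)$. Since $\theta_*\mapsto w_*$ is linear and bounded, and $d\exp_e^{\beta}(u_0)$ is a bounded linear map $T_e\mathcal D^s_{ex}\to T_{\gamma(1)}\mathcal D^s_{ex}$ (where $\gamma(t)=\exp_e^{\beta}tu_0$), this says that the image of $d\exp_e^{\beta}(u_0)$ consists only of fields vanishing outside $B(0,R)$.

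\textbf{Contradiction via a curve.} Use the curve $\epsilon\mapsto u_0+\epsilon w_*$ and the flows $\gamma_\epsilon(1)=\exp_e^{\beta}(u_0+\epsilon w_*)$. Pick $\theta_*$ compactly supported near a far point $y$, disjoint from $\mathrm{supp}\,\theta_0$. Writing $J(t)=d\exp_e^{\beta}(tu_0)\,tw_*$ for the Jacobi field, differentiate the Lagrangian equation \eqref{e:gammaODE} in $\epsilon$. Using the explicit right-hand side $F(\gamma,\theta_0)$ from \eqref{e:Gamma_gam}, one gets
\[
\frac{d}{dt}J = \partial_\gamma F(\gamma,\theta_0)J + F(\gamma,\theta_*),\qquad J(0)=0 .
\]
Hence $J(t)=\int_0^t F(\gamma(\tau),\theta_*)\,d\tau+O\big(\int_0^t\|J\|\big)$, and Gronwall together with a short-time expansion gives $J(t)=t\,u_*\circ\gamma(t)+o(t)$ with $u_*=w_*$. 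To turn this into a pointwise statement, rescale the data: replace $\theta_*$ by a bump at a far point $y$ with $\gamma(1)^{-1}(y)$ far from $\mathrm{supp}\,\theta_0$. Alternatively, use analyticity in $t$: the function $t\mapsto J(t)(x)$ is real analytic (Theorem~\ref{analytic}), and its first Taylor coefficient at $t=0$ is $w_*(x)$. A nonzero derivative forces $J(t)(x)\neq0$ for all but isolated $t$, and rescaling time via the homogeneity $\exp_e^{\beta}(tu_0)$ lets us move the nonvanishing to $t=1$. In particular, $w_*(x)\neq0$ at some $x$ far away, because $w_*=-\nabla^\perp(-\Delta)^{-1+\beta/2}\theta_*$ is nonlocal and not identically zero on any open set unless $\theta_*=0$.

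\textbf{Main obstacle.} The hardest step is passing from ``$J$ is nonzero near $t=0$'' to nonvanishing of $d\exp_e^{\beta}(u_0)w_*$ at the fixed time $1$ and at a point $x_*$ with $B(x_*,2)$ disjoint from $\mathrm{supp}\,\theta_0$. I would first try the analyticity argument. Fix $x_*$ far out with $w_*(x_*)\neq0$. Then $\epsilon\mapsto\exp_e^{\beta}(u_0+\epsilon w_*)(x_*)$ is analytic, and its derivative at $\epsilon=0$ is an analytic function of the parameter $\lambda$ when $u_0$ is replaced by $\lambda u_0$; at $\lambda=0$ it equals $w_*(x_*)\neq0$. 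It is therefore nonzero for all but isolated $\lambda\in[0,1]$. If $\lambda=1$ happens to be exceptional, perturb $\theta_0$ slightly to $\lambda\theta_0$ with $\lambda$ near $1$. This still gives a dense set, since such $\lambda\theta_0$ are arbitrarily close to $\theta_0$ and remain compactly supported. This is where defining $\mathcal S$ as those $\theta_0$ for which the property holds is essential, rather than claiming it for every compactly supported $\theta_0$.
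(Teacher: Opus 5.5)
Your final paragraph is essentially the paper's proof. With $x_*$ at distance at least $2$ from $\mathrm{supp}\,\theta_0$ and $w_*(x_*)\neq 0$, the function $\lambda\mapsto\big(d\exp_e^{\beta}(\lambda u_0)w_*\big)(x_*)$ is analytic and equals $w_*(x_*)\neq 0$ at $\lambda=0$, since $d\exp_e^{\beta}(0)=\mathrm{id}$. One then takes $\mathcal S$ to consist of the functions $\lambda_n\theta_0$, with $\lambda_n\nearrow 1$ avoiding the discrete zero set.

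The contradiction and Jacobi-field detour before it is unnecessary. The paper also gets $w_*(x_*)\neq 0$ more directly: it chooses $\theta_*$ as a bump supported near $x_*$ and reads off the sign from the explicit kernel of $S_{\beta,k}$, rather than relying on a unique-continuation claim.
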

\begin{proof} 
We can assume that the initial data $\theta_0$ 
has compact support since such functions are dense 
in $H^s(\mathbb{R}^2)$. 
Let $x_\ast \in \mathbb{R}^2$ be any point whose distance 
from $\mathrm{supp}\, \theta_0$ is at least $2$. 
Expressing $S_{\beta,k}$ as a principal value integral 
\begin{align*}
S_{\beta,k} \theta(x) 
= 
\frac{1}{\sqrt{2\pi}} 
\, p.v. \int_{\mathbb R^2} 
\frac{x_k - y_k}{|x-y|^{2-\beta}}\,\theta(y) \, dy 
\qquad (k=1,2) 
\end{align*} 
we see that it is always possible to choose 
a smooth positive bump function $\theta_\ast$ 
supported near $x_\ast$ such that 
$w_\ast(x_*) 
=  
\big( 
S_{\beta,2}\theta_\ast (x_\ast),-S_{\beta,1}\theta_\ast (x_\ast) 
\big) 
\ne 0$.

Next, consider the derivative of the exponential map 
evaluated at $x_\ast$ and $w_\ast$ above 
\begin{align}\label{e:diff_exp}
t \mapsto \epsilon (t) 
= 
\big( d \exp_{e}^{_{\beta}} (tu_0)w_\ast \big)(x_*)
\end{align}
as an analytic function of the time variable. 
From \eqref{e:dexp} 
(cf. the  proof of Cor. \ref{cor:locDiff}) 
we have that $d \exp_{e}^{_{\beta}} (0)= \mathrm{id}$ 
which implies that $\epsilon (t)$ is not identically zero 
and, consequently, we can choose a sequence of points 
in the interval $(0,1)$ with $t_n \nearrow 1$ 
such that 
$\epsilon(t_n) \neq 0$ for $n=1, 2 \dots$. 
Letting now $\mathcal{S}$ be the set which consists of 
the functions 
of the form $t_n \theta_0$ where $\theta_0$ has compact support we see that both conditions in \eqref{e:2cond} are trivially satisfied and the lemma follows.    
\end{proof}

The following inequalities apply to functions whose compact supports do not overlap. For any $s\ge 0$ there exists a constant $C > 0$ such that given any $ x,y $ in $\mathbb R^2$ satisfying $0<r={|x-y|}/{4} <1$, the inequality
\begin{equation}\label{ineq2}
	\|f_1+f_2\|_{H^s} \geq C(\|f_1\|_{H^s}+\|f_2\|_{H^s})
\end{equation}
holds for all functions $f_1,f_2 \in H^{s}(\mathbb R^2)$ where $f_1$ is supported in the ball $B(x,r)$ centered at $x$ and $f_2$ is supported in the ball $B(y,r)$ centered at $y$ (See Lemma \ref{lemma_vanishing_support1} in the Appendix for the proof.).\\

We next proceed with the proof of 
the main theorem.
\begin{proof}[Proof of Theorem \ref{nonuniform}]
Without loss of generality we assume that $T=1$. Given any $\theta_0$ in $\mathcal{O}$ we aim to show that there exists $R_* > 0$ such that for all $R$ in the interval $(0, R_*)$ the data-to-solution map of \eqref{eq:gSQG} is not uniformly continuous in 
$\mathcal{O}\cap B^{s}(\theta_0,R)$, 
where $B^s(\theta_0,R)$ stands for the open ball in $H^s$ centered at $\theta_0$ of radius $R$. 
To this end we will construct two sequences 
of initial data 
$\theta_0^{(1,n)}$ and $\theta_0^{(2,n)}$ 
where $n = 1, 2 \dots$ 
confined in $B^s(\theta_0,R)$ 
and satisfying 
\begin{align} 
\lim\limits_{n \to \infty} \big\|\theta_0^{(1,n)} - \theta_0^{(2,n)}\big\|_{H^s} 
= 0 
\end{align} 
and 
\begin{align} 
\limsup_n \big\| 
\theta_0^{(1,n)} \circ (\gamma^{(1,n)})^{-1} 
- 
\theta_0^{(2,n)} \circ(\gamma^{(2,n)})^{-1} 
\big\|_{H^s} > 0 
\end{align} 
where 
$t \to \gamma^{(i,n)}$ 
are the flow maps of the velocity fields corresponding to 
$$ 
u_0^{(i,n)} 
= 
-\nabla^\perp(-\Delta)^{-1+\frac{\beta}{2}} \theta_0^{(i,n)}, 
\qquad 
\text{where} 
\;\; n = 1, 2 \dots 
\;\; \text{and} \;\; i=1, 2. 
$$ 
In what follows it will be convenient to introduce 
the notation 
$E_e^{\beta}(\theta)=\exp_{e}^{\beta} u$. 
From Lemma \ref{lemma_dense} we can find 
a nonzero $H^s$ vector field $w_*$ 
and a point $x_*$ in $\mathbb R^2$ located far away 
from the support of $\theta_0$ such that 
\begin{align}\label{e:fromlocdiff}
\big| 
\big( dE_{e}^{_{\beta}}({\theta_0})(w_*)\big)(x_*) 
\big| 
\geq 
\kappa_* \|w_*\|_{H^s} 
\end{align} 
for some constant $\kappa_* > 0$. 

Furthermore, recall that by continuity of the composition operator and the Sobolev lemma there is a constant $C>0$ such that
	\begin{align}
		&\frac{1}{C} \|f\|_{H^s} \leq \|f \circ \gamma^{-1}\|_{H^s} \leq C \|f\|_{H^s} \label{bdd}
    \end{align}
	for any $f \in H^s(\mathbb R^2)$ and any transformation $\gamma$ in $E_{e}^{_{\beta}}(B_{}^s(\theta_0,R_1))$ and where $R_1 > 0$ is chosen so that $B_{}^s(\theta_0,R_1)\subset \mathcal{O}$.
    
Let $\gamma_0= E_{e}^{_{\beta}}(\theta_0)$ and observe that it follows from \eqref{e:fromlocdiff} that for some sufficiently small $r_*>0$ the distance between the images under $\gamma_0$ of $\operatorname{supp}\theta_0$ and of the closed ball $\overline{B_{}(x_*,r_*)}$ is strictly positive, namely
    $$d=
    \mathrm{dist}
    \Big(\gamma_0(
    \operatorname{supp}
    \theta_0),\gamma_0\big(\overline{B_{}(x_*,r_*)}\big)\Big)>0.$$
Introduce the following sets
    	\begin{align}
    	    \mathcal{K}_1=\big\{ x \in \mathbb R^2 \; | \; x \text{  is at most } d/4 \text{ away from } \gamma_0(\operatorname{supp}\theta_0)\big\} 
    	\end{align}
and
\begin{align}
    \mathcal{K}_2=\big\{ x \in \mathbb R^2 \; | \; x \text{  is at most } d/4 \text{ away from }\gamma_0(\overline{B_{}(x_*,r_*)})  \big\}.
\end{align}	
     
	By selecting $R_2 \in (0,R_1)$ we can arrange things so that for every $\gamma, \gamma' \in E_e^{\beta}\big(B_{}^s(\theta_0,R_2)\big)$, we have the following estimates
	\begin{align}
	|\gamma(x)-\gamma(y)| \lesssim \|D\gamma\|_{\infty}|x-y|\lesssim L |x-y|,\quad \forall x,y \in \mathbb R^2 \quad \end{align}
  where $L>0$ is a constant and
  \begin{align}
 \|\gamma-\gamma'\|_{L^\infty} \leq \min\{1,d/4\}
	\end{align}
    which follow by combining the mean value estimate, the Sobolev lemma and the triangle inequality.
In particular, this ensures that
	\[
	\gamma(\operatorname{supp}\theta_0) \subseteq \mathcal{K}_1 \quad \mbox{and} \quad \gamma\big(\overline{B(x_*,1)}\big) \subseteq \mathcal{K}_2
	\]
	for any $\gamma \in E_e^{\beta}\big(B_{}^s(\theta_0,R_2)\big)$. 
    
   Next, consider the Taylor expansion with the integral remainder
	\begin{align}
	E_e^{\beta}(\theta + h) = E_e^{\beta}(\theta) + d E_{e}^{_{\beta}} (\theta)h + \int_0^1 (1-t) d^2 E_e^{\beta}{(\theta + t h)}(h,h) \;dt
	\end{align}
    where $h\in H^s(\mathbb R^2)$.
Choose $ R_3 \in(0, R_2)$ and observe that there exists $M > 0$ such that
	\[
	\|d^2 E_e^{\beta}(\theta)(h_1,h_2)\|_{H^s} \leq M \|h_1\|_{H^s} \|h_2\|_{H^s}
	\]
	and
	\[
	\|d^2 E_e^{\beta}(\theta_1)(h_1,h_2)-d^2 E_e^{\beta}(\theta_2)(h_1,h_2)\|_{H^s} \leq M \|\theta_1-\theta_2\|_{H^s} \|h_1\|_{H^s} \|h_2\|_{H^s}
	\]
	for all $\theta, \theta_1, \theta_2 \in B_{}^s(\theta_0,R_3)$ and $h_1,h_2 \in H^s(\mathbb R^2)$. Next, we select $R_{*}$ within the interval  $ (0, R_3)$ such that it satisfies the condition
	\begin{equation}\label{condition}
		\max\{C M R_*^2,C M R_*\} < \kappa_{*}/8.
	\end{equation}
	Now, we choose $R\in(0, R_*)$ and construct two sequences $(\theta_0^{(1,n)})_{n \geq 1}$ and $( \theta_0^{(2,n)})_{n \geq 1}$ as follows.
The initial sequence is defined as
	\[
	\theta_0^{(1,n)}=\theta_0 + \vartheta^{(n)}
	\]
	where $\vartheta^{(n)}$ in $H^s(\mathbb R^2)$ is chosen arbitrarily such that $\|\vartheta^{(n)}\|_{H^s}={R}/{2}$ and its support is contained within $B_{}(x_*,r_n)$, where
	\[
	r_n=\frac{\kappa_*}{8Ln}\|w_*\|_{H^s}\,.
	\]
	Therefore, while the total mass of $\vartheta^{(n)}$ remains unchanged, its support gradually contracts. The second sequence is obtained by perturbing the first one with $w_*^{(n)}:=w_*/n$ which introduces a shift in the supports. This results in
	\begin{align}
	 \theta_0^{(2,n)} = \theta_0^{(1,n)} + w_*^{(n)}=\theta_0 + \vartheta^{(n)}+w_*^{(n)}.
	\end{align}
 Taking $N$ sufficiently large we have
	\[
	\theta_0^{(1,n)},\theta_0^{(2,n)} \in B^s(\theta_0,R)\quad \mbox{ and }\quad r_n \leq 1, \; \forall n \geq N.
	\]
	By construction we have
	\[
	\lim_{n \to \infty} \|\theta_0^{(1,n)}- \theta_0^{(2,n)}\|_{H^s} = \lim_{n \to \infty} \|w_*^{(n)}\|_{H^s} = 0.
	\]
	For $n \geq N$, let $\gamma^{(1,n)} =  E_{e}^{\beta}(\theta_0^{(1,n)})$ and $\gamma^{(2,n)} =  E_{e}^{\beta}( \theta_0^{(2,n)})$, we get
	\begin{align*}
        &\|\theta_0^{(1,n)} \circ (\gamma^{(1,n)})^{-1} - \theta_0^{(2,n)} \circ (\gamma^{(2,n)})^{-1}\|_{H^s} \\
		 &  \quad\quad \qquad\qquad\quad\geq
       \|\theta_0^{(1,n)} \circ (\gamma^{(1,n)})^{-1}-\theta_0^{(1,n)} \circ ( \gamma^{(2,n)})^{-1} \|_{H^s} - \|w_*^{(n)} \circ (\gamma^{(2,n)})^{-1}\|_{H^s}
	\end{align*}
	and by \eqref{bdd} we have $\limsup\limits_{n \to \infty} \|w_*^{(n)} \circ ( \gamma^{(2,n)})^{-1}\|_{H^s}=0$. Moreover, observe that
	\begin{align*}
		&\|\theta_0^{(1,n)} \circ (\gamma^{(1,n)})^{-1}-\theta_0^{(1,n)} \circ (\gamma^{(2,n)})^{-1} \|_{H^s}\\
        &= \|(\theta_0 \circ (\gamma^{(1,n)})^{-1}- \theta_0 \circ (\gamma^{(2,n)})^{-1})+(\vartheta^{(n)} \circ (\gamma^{(1,n)})^{-1}-\vartheta^{(n)} \circ (\gamma^{(2,n)})^{-1})\|_{H^s}\\
        &\ge C \|(\theta_0 \circ (\gamma^{(1,n)})^{-1}- \theta_0 \circ (\gamma^{(2,n)})^{-1})\|_{s}+ C\|(\vartheta^{(n)} \circ (\gamma^{(1,n)})^{-1}-\vartheta^{(n)} \circ (\gamma^{(2,n)})^{-1})\|_{H^s}.
	\end{align*}
    The last inequality follows from the reverse triangle inequality \eqref{ineq2} and the fact that the support of the terms $(\theta_0 \circ (\gamma^{(1,n)})^{-1}$ and $ \theta_0 \circ (\gamma^{(2,n)})^{-1})$ is contained in $\mathcal{K}_1$, which implies that their difference $(\theta_0 \circ (\gamma^{(1,n)})^{-1}- \theta_0 \circ (\gamma^{(2,n)})^{-1})$ is also supported in $\mathcal{K}_1$. Similarly, the functions  $(\vartheta^{(n)} \circ (\gamma^{(1,n)})^{-1}$ and $\vartheta^{(n)} \circ (\gamma^{(2,n)})^{-1})$ are supported in $\mathcal{K}_2$, meaning that their difference $(\vartheta^{(n)} \circ (\gamma^{(1,n)})^{-1}-\vartheta^{(n)} \circ (\gamma^{(2,n)})^{-1})$ is also supported in $\mathcal{K}_2$. Therefore, it suffices to show	\[
	\limsup_{n \to \infty} \|\vartheta^{(n)} \circ (\gamma^{(1,n)})^{-1} - \vartheta^{(n)} \circ (\gamma^{(2,n)})^{-1}\|_{H^s} > 0.
	\]
Again, this can be done by showing that the supports of the functions $\vartheta^{(n)} \circ (\gamma^{(1,n)})^{-1}$ and $\vartheta^{(n)} \circ ( \gamma^{(2,n)})^{-1}$ are disjoint so that we can apply \eqref{ineq2}
\begin{align}
    \|\vartheta^{(n)} \circ (\gamma^{(1,n)})^{-1} - \vartheta^{(n)} \circ ( \gamma^{(2,n)})^{-1}\|_{H^s}&\ge C\Big(\|\vartheta^{(n)} \circ (\gamma^{(1,n)})^{-1}\|_{s} +\| \vartheta^{(n)} \circ (\gamma^{(2,n)})^{-1}\|_{H^s}\Big)\\
    &\ge C\|\vartheta^{(n)} \circ (\gamma^{(1,n)})^{-1}\|_{H^s},
\end{align}
and $\|\vartheta^{(n)} \circ (\gamma^{(1,n)})^{-1}\|_{s}$ has a universal positive lower bound using the first of the inequalities in \eqref{bdd}. 

Now, we estimate the quantity $|\gamma^{(1,n)}(x_*)-\gamma^{(2,n)}(x_*)|$. We have 
	\begin{align*}
		\gamma^{(1,n)} &=E_{e}^{\beta}(\theta_0 + \vartheta^{(n)}) \\
		&= E_{e}^{\beta}(\theta_0) + dE_{e}^{\beta}({\theta_0})(\vartheta^{(n)}) + \int_0^1 (1-t) d^2 E_{e}^{\beta}(\theta_0 + t \vartheta^{(n)})(\vartheta^{(n)},\vartheta^{(n)}) \;dt
	\end{align*}
	and 
	\begin{align}
		 \gamma^{(2,n)} &= E_{e}^{\beta}(\theta_0 + \vartheta^{(n)} + w_*^{(n)})\\
        &= E_{e}^{\beta}(\theta_0) + d E_{e}^{\beta}(\theta_0)(\vartheta^{(n)}+w_*^{(n)})\notag \\
		&~~~+ \int_0^1 (1-t) d^2 E_{e}^{\beta}(\theta_0+t(\vartheta^{(n)}+w_*^{(n)}))(\vartheta^{(n)}+w_*^{(n)},\vartheta^{(n)}+w_*^{(n)})\;dt.\notag
	\end{align}
Hence, we have
	\begin{align*}
	 \gamma^{(1,n)}-\gamma^{(2,n)} &= -dE_{e}^{\beta}(\theta_0 )(w_*^{(n)}) 
     - \underbrace{ 2 \int_0^1 (1-t) d^2E_{e}^{\beta}(\theta_0+t(\vartheta^{(n)}+w_*^{(n)}))(w_*^{(n)},\vartheta^{(n)}) \;dt}_{J_1}\\
     &- \underbrace{ \int_0^1 (1-t) d^2E_{e}^{\beta}(\theta_0 + t(\vartheta^{(n)}+w_*^{(n)}))(w_*^{(n)},w_*^{(n)}) \;dt}_{J_2} \\
    & + \underbrace{\int_0^1 (1-t) \left(d^2E_{e}^{\beta}(\theta_0 + t\vartheta^{(n)})(\vartheta^{(n)},\vartheta^{(n)}) - d^2E_{e}^{\beta}(\theta_0 + t(\vartheta^{(n)}+w_*^{(n)}))(\vartheta^{(n)},\vartheta^{(n)})\right) \; dt}_{J_3} .
	\end{align*}
	Applying the previously derived bounds for the second derivatives, we obtain the following estimates:
	\begin{align*}
	\|J_1\|_{H^s} \leq 2K \|w_*^{(n)}\|_{H^s} \|\vartheta^{(n)}\|_{H^s} = \frac{MR}{n} \|w_*\|_{H^s}, 
    \end{align*}
    and
    \begin{align*}
	\|J_2\|_{H^s} \leq \frac{M}{2n^2} \|w_*^{(n)}\|_{H^s}^{2}  \leq \frac{MR}{n} \|w_*\|_{H^s}
	\end{align*}
	and
    \[
	\|J_3\|_{H^s} \le \int_{0}^1(1-t) M \|w_*^{(n)}\|_{H^s} \|\vartheta^{(n)}\|_{H^s}^2 dt=  \frac{M}{2} \|w_*^{(n)}\|_{H^s} \|\vartheta^{(n)}\|_{H^s}^2\le \frac{MR^2}{4n} \|w_*\|_{H^s} 
	\]
	where the bound for $\|J_2\|_{H^s}$ holds for $n \geq N$ by increasing $N$ if necessary. Thus, by the Sobolev lemma and the choice for $R_*$ in \eqref{condition}, we obtain
	\begin{align*}
	|J_1(x_*)|+|J_2(x_*)|+|J_3(x_*)|& \leq  \frac{C M R}{n} \|w_*\|_{H^s} + \frac{C M R}{n} \|w_*\|_{H^s}+\frac{C M R^2}{4n} \|w_*\|_{H^s} \\
    &\le \frac{\kappa_{*}}{2n} \|w_*\|_{H^s}.
	\end{align*}
	Using this inequality we find
	\begin{align*}
	\big|\gamma^{(1,n)}(x_*)-\gamma^{(2,n)}(x_*)\big| &\ge \big|d E_{e}^{\beta}(\theta_0)(w_*^{(n)})(x_*)\big| - \frac{\kappa_*}{2n}\|w_*\|_{H^s}\\
    &\ge \frac{1}{n} \kappa_{*} \|w_*\|_{H^s} - \frac{\kappa_{*}}{2n}\|w_*\|_{H^s}\\
    &= \frac{\kappa_{*}}{2n} \|w_*\|_{H^s}.
	\end{align*}
	By the Lipschitz property of $\gamma^{(1,n)}, \gamma^{(2,n)}$ we have
	\begin{align*}
	\gamma^{(1,n)}(B_{r_n}(x_*)) \subseteq B_{}(\gamma^{(1,n)}(x_*),\frac{\kappa_{*}}{8n} \|w_*\|_{H^s})\quad \text{ and }\quad \gamma^{(2,n)}(B_{r_n}(x_*)) \subseteq B_{}( \gamma^{(2,n)}(x_*),\frac{\kappa_{*}}{8n} \|w_*\|_{H^s}).
	\end{align*}
	This means $\vartheta^{(n)} \circ (\gamma^{(1,n)})^{-1}$ is supported in $B_{}(\gamma^{(1,n)}(x_*),\frac{\kappa_{*}}{8n} \|w_*\|_{H^s})$ and $\vartheta^{(n)} \circ (\gamma^{(2,n)})^{-1}$ is supported in $B_{}(\gamma^{(2,n)}(x_*),\frac{\kappa_{*}}{8n} \|w_*\|_{H^s})$. Since the distance between the centers of support is larger than $\frac{\kappa{*}}{2n}\|v\|_{H^s}$ and the radii of the supports are $\frac{\kappa_{*}}{8n}\|w_*\|_{H^s}$, we can apply \eqref{ineq2}. Thus, combining with \eqref{bdd}, we obtain
	\begin{align*}
		\|\vartheta^{(n)} \circ (\gamma^{(1,n)})^{-1} - \vartheta^{(n)} \circ (\gamma^{(2,n)})^{-1}\|_{H^s} 
        &\geq C' (\|\vartheta^{(n)} \circ (\gamma^{(1,n)})^{-1}\|_{H^s} + \|\vartheta^{(n)} \circ (\gamma^{(2,n)})^{-1}\|_{H^s}) \\
        &\geq \frac{C'}{C} R/2
	\end{align*}
and therefore we have
	\begin{align*}
	\limsup_{n \to \infty} \|\theta_0^{(1,n)} \circ (\gamma^{(1,n)})^{-1} -\theta_0^{(2,n)} \circ (\gamma^{(2,n)})^{-1}\|_{H^s} \geq \overline C R
	\end{align*}
	with $\overline C$ independent of $R \in (0,R_*)$ whereas $\lim\limits_{n \to \infty} \|\theta_0^{(1,n)}- \theta_0^{(2,n)}\|_{H^s} = 0$. This holds for every $R$ within the range $(0, R_*)$ and the proof is completed.
\end{proof}

\section{Non-continuity of the solution map for $C^\alpha$} 

In \cite{I-JJ} the author considers local existence 
and uniqueness of solutions of \eqref{eq:gSQG} 
in the standard $C^{1,\alpha}$ (large) H\"older spaces 
and poses questions about propagation of regularity 
of solutions in this setting. 
Subsequently, in \cite{CJK25} the authors establish 
local wellposedness of \eqref{eq:gSQG} 
in the (little) H\"older $c^{1,\alpha}$ spaces 
with $\beta < \alpha < 1$. 
In this section we prove that 
the data-to-solution map fails 
to be continuous\footnote{For the corresponding 
result in the case of the 3D Euler equations 
using shear flow initial data, see \cite{MY}.}
in the large H\"older space 
for any $0 < \alpha < 1$. 

Recall the definition of the H\"olderian semi-norm, 
namely 
\begin{equation*} 
\|\theta\|_{\alpha}:=\sup_{x\not=y}\frac{|\theta(x) 
- 
\theta(y)|}{|x-y|^\alpha}. 
\end{equation*} 
The corresponding little H\"older space $c^\alpha$ is defined as 
\begin{equation*} 
c^\alpha := 
\left\{ 
f \in C^\alpha: 
\lim_{|h|\to 0} \frac{|f(x+h)-f(x)|}{|h|^\alpha} = 0 
\quad \text{for all} \quad x 
\right\}. 
\end{equation*} 
%


\noindent
{\it Proof of Theorem \ref{holder case}.}\quad 
Since   $\theta_0\in  C^\alpha\setminus c^\alpha$, 
we see that 
there exist $\varepsilon_0 >0$ 
and a sequence of numbers 
$\ell_n \to 0$ such that 
\[ 
\frac{\lvert \theta_0(x_0+ \ell_n) - \theta_0(x_0) \rvert} {\lvert \ell_n \rvert^{\alpha}} \ge \varepsilon_0. 
\] 
By setting $\ell_n=-h_nT_n$ 
(with $h_n\to 0$ and $T_n\to 0$) we have
\[ 
\frac{\lvert \theta_0(x_0- h_nT_n)\rvert}{\lvert h_nT_n \rvert^{\alpha}} \ge \varepsilon_0 
\] 
since $\theta_0(x_0)=0$. 
From the definition of the H\"olderian norm, we have  
\begin{equation*} 
\begin{split} 
\|\theta_n(T_n)-\theta(T_n)\|_{\alpha} 
&= 
\sup_{x\not=y} 
\frac{|(\theta_{n}(T_n,x)-\theta(T_n,x))-(\theta_{n}(T_n,y)-\theta(T_n,y))|}{|x-y|^\alpha}. 
\\ 
\end{split} 
\end{equation*} 
We now choose 
$x=\eta(T_n,x_0)$ and $y=\eta_n(T_n,x_0)$, 
to obtain the following lower bound 
for the above expression 
\begin{equation*} 
\begin{split} 
&\geq 
\frac{|(D\eta_n\theta_{0}\circ\eta_n^{-1}\circ\eta-D\eta\theta_0\circ\eta^{-1}\circ\eta)-(D\eta_n\theta_{n}\circ\eta^{-1}_n\circ\eta_n-D\eta\theta_0\circ\eta\circ\eta_n)|}{|x-y|^\alpha}. 
\\
\end{split} 
\end{equation*} 
Since $D\eta_n=D\eta$ we further estimate 
\begin{equation*} 
    \begin{split} 
 &\geq \|D\eta^{-1}\|^{-1}  \frac{|(\theta_{0}\circ\eta_n^{-1}\circ\eta-\theta_0\circ\eta^{-1}\circ\eta)-(\theta_{n}\circ\eta^{-1}_n\circ\eta_n-\theta_0\circ\eta\circ\eta_n)|}{|x-y|^\alpha}\\
& =\|D\eta^{-1}\|^{-1} 
\frac{|\theta_0(x_0-h_{n}T_n)-\theta_0(x_0))-(\theta(x_0)-\theta_0\circ\eta^{-1}\circ\eta_n(T_n,x_0))|}{|\eta(T_n,x_0)-\eta_n(T_n,x_0)|^\alpha} 
\\ 
& 
=\|D\eta^{-1}\|^{-1} 
\frac{|\theta_0(x_0-h_nT_n)+\theta_0\circ\eta^{-1}\circ\eta_n(T_n,x_0)|}{|h_nT_n|^\alpha}
\end{split} 
\end{equation*} 
where we used the norm in \eqref{lower bound}. 
Recalling the alignment condition \eqref{alignment}, 
we now find that 
\begin{equation*} 
    \theta_0(x_0-h_nT_n)\cdot\left(\theta_0\circ\eta^{-1}\circ\eta_n(T_n,x_0)\right)>0 
\end{equation*} 
for any sufficiently large $n$. 
Thus, we have 
\begin{equation*} 
\begin{split} 
\geq &\|D\eta^{-1}\|^{-1} \frac{|\theta_0(x_0-h_nT_n)|+|\theta_0\circ\eta^{-1}\circ\eta_n(T_n,x_0)|}{|h_nT_n|^\alpha}\\
\geq &\|D\eta^{-1}\|^{-1} \frac{|\theta_0(x_0-h_nT_n)|}{|h_nT_n|^\alpha}\geq \|D\eta^{-1}\|^{-1} \epsilon_0 
\end{split} 
\end{equation*} 
which is the desired estimate. 
\hfill$\square$ 

\begin{remark}
    In the case of the whole space 
    we can employ approximate solutions as in \cite{BL}. 
    First, we mimic the periodic case by introducing 
    suitable spatial cut-offs. 
    Next, we choose the remainder term so that it shifts 
    the main term just as in the periodic case. 
    Since we can take arbitrary $n$, this procedure 
    is essentially trivial and we therefore omit 
    the details. 
\end{remark} 
%


\appendix
\counterwithin{theorem}{section}

    \section{Analyticity in real Banach spaces}\label{appendix_analyticity}
    The notion of analyticity can be extended to the setting of maps between infinite dimensional spaces. Let $E$ and $F$ be Banach spaces over $\mathbb{C}$ and let $U\subset X$ be an open subset. A map $f: U\rightarrow F$ is analytic if for each $x\in U$, $h\in E$ and $y^*\in F^*$ the function $z\rightarrow y^*\big(f(x+zh)\big)$ is an analytic function of the complex variable $z$ when $|z|$ is sufficiently small. Consequently, $y^*\big(f(x+zh)\big)$ can be represented locally as a power series in the $z$ variable and the standard Cauchy estimates apply. Furthermore, if $f$ is locally bounded then $f$ is complex analytic in $U$ if and only if $f$ is G\^ateaux differentiable at each point in $U$. 

If the G\^ateaux derivative $\partial_h f(x)$ is linear in $h\in E$ and continuous in $x\in U$ then it coincides with the standard Fr\'echet differential $df(x_0)h$ and $f$ is Fr\'echet differentiable at $x_0$. 
    
The fundamental theorem of ordinary differential equations and the implicit function theorem in the analytic setting of Banach spaces are well known. Proofs of both of the propositions below can be found for example in \cite{Zeidler1986} or in the paper of Shnirelman \cite{Shn12}.

\begin{proposition}[Fundamental theorem of ODE]\label{prop: analytic_Shnirelman}
	Let $X, Y$ be complex Banach spaces. Consider the equation
	\begin{equation}\label{eqn: Cauchy_prob}
	\frac{d}{dt}\xi(t)=F(\xi(t),\vartheta),\quad \xi(0)=\xi_0,
	\end{equation}
where $t\rightarrow \xi(t) \in X$, $\vartheta\in Y$ and $F: X\times Y\rightarrow X$ is analytic in a neighborhood of $(\xi_0,\vartheta_0)\in X\times Y$. There exist $r>0$ and $T>0$ such that 
\begin{enumerate}
    \item [(i)] if $\|\vartheta-\vartheta_0\|_Y < r$, then there exists a unique solution $\xi(t,\vartheta)$ of \eqref{eqn: Cauchy_prob} for $|t|<T$; 
    \item [(ii)] the map $(t,\vartheta)\mapsto \xi(t,\vartheta)$ from $(-T,T)\times B_r (\vartheta_0)$ to $X$ is analytic, where  $B_r (\vartheta_0)$ denotes the open ball centered at $\vartheta_0$ of radius $r$.
\end{enumerate}

\end{proposition}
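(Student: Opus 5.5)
The plan is to prove the Fundamental theorem of ODE (Proposition \ref{prop: analytic_Shnirelman}) by Picard iteration carried out with \emph{complex} time and complex parameter, and to pass analyticity to the limit through the uniform convergence of the iterates. First I fix the quantitative setting. Since $F$ is analytic near $(\xi_0,\vartheta_0)$, it is locally bounded there (after shrinking the neighborhood if necessary). So I choose $R>0$ and $r>0$ with $\|F(\xi,\vartheta)\|_X\le M$ on the closed set $\overline{B_R(\xi_0)}\times\overline{B_{2r}(\vartheta_0)}$. Applying the Cauchy estimates to $z\mapsto y^*\big(F(\xi+zh,\vartheta)\big)$ and taking the supremum over $\|y^*\|\le 1$ gives $\|\partial_\xi F(\xi,\vartheta)\|\le 2M/R$ on $B_{R/2}(\xi_0)\times B_r(\vartheta_0)$. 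By the mean value inequality, $F$ is then Lipschitz in $\xi$ there with constant $L=2M/R$, uniformly in $\vartheta$. I then take $T<\min\{R/(2M),\,1/(2L)\}$.

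Next I define the iterates on the complex disc $D_T=\{t\in\mathbb C:|t|<T\}$ by
\begin{align*}
\xi_0(t,\vartheta)=\xi_0,\qquad \xi_{k+1}(t,\vartheta)=\xi_0+\int_0^t F\big(\xi_k(s,\vartheta),\vartheta\big)\,ds ,
\end{align*}
where the integral is taken along the segment $[0,t]$. By induction, $\|\xi_k-\xi_0\|\le M|t|<R/2$, so each iterate stays in the region where the bounds hold. Each $\xi_k$ is analytic in $(t,\vartheta)$ for the following reasons. The composition of analytic maps is analytic. The parametrized integral $\int_0^1 t\,G(\tau t,\vartheta)\,d\tau$ of a bounded analytic $G$ is analytic, because after applying any $y^*\in Y^*$ one may differentiate under the integral sign (or use Morera's theorem) in each complex direction, and local boundedness then upgrades G\^ateaux holomorphy to analyticity, as recalled at the start of this Appendix. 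The Lipschitz bound gives
\begin{align*}
\sup_{D_T\times B_r(\vartheta_0)}\|\xi_{k+1}-\xi_k\|\le LT\sup_{D_T\times B_r(\vartheta_0)}\|\xi_k-\xi_{k-1}\|\le \tfrac12\sup_{D_T\times B_r(\vartheta_0)}\|\xi_k-\xi_{k-1}\|,
\end{align*}
so the $\xi_k$ converge uniformly to a bounded limit $\xi(t,\vartheta)$ that solves the integral equation, hence \eqref{eqn: Cauchy_prob}.

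The step I expect to be the main obstacle is showing that this uniform limit is analytic. I will reduce it to the scalar case. For fixed $(t,\vartheta)$, a direction $(\tau,h)$, and $y^*\in X^*$, the functions $z\mapsto y^*\big(\xi_k(t+z\tau,\vartheta+zh)\big)$ are holomorphic and converge uniformly on a small disc. By Weierstrass' theorem, the limit $z\mapsto y^*\big(\xi(t+z\tau,\vartheta+zh)\big)$ is holomorphic. Since $\xi$ is also bounded, the characterization of analyticity in the Appendix (weakly holomorphic plus locally bounded) applies, so $\xi$ is analytic on $D_T\times B_r(\vartheta_0)$. Restricting to real $t\in(-T,T)$ gives (ii).

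Uniqueness in (i) follows from the same Lipschitz estimate. If $\zeta$ is another solution on $|t|<T$, it remains in $B_{R/2}(\xi_0)$ for $|t|<T$ by the bound $\|\dot\zeta\|\le M$. Then Gronwall's inequality applied to $\|\xi(t)-\zeta(t)\|\le L\big|\int_0^t\|\xi-\zeta\|\,ds\big|$ forces $\xi=\zeta$. When the spaces are real, as in our applications, I complexify $X$ and $Y$, extend $F$ holomorphically near the real point, and restrict the resulting solution back to real data and real time.
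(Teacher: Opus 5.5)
Your proof is correct. The paper does not actually prove this proposition; it only refers to [Shn12, Thm.~2.2] and [Zeidler, Ch.~4]. Your complex-time Picard iteration is therefore a self-contained substitute for the cited result, not a reconstruction of an argument given in the text.

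The other standard route, which the Appendix also makes available, is the analytic implicit function theorem (Proposition~\ref{prop:AIFT_Shnirelman}). One applies it to $\Phi(\xi(\cdot),\vartheta)=\xi(\cdot)-\xi_0-\int_0^{\cdot}F(\xi(s),\vartheta)\,ds$ on a space of curves, using that $\partial_\xi\Phi=I-V$ with $V$ a Volterra operator, hence invertible. That route is quick once the implicit function theorem is granted, and it works along the whole interval of existence of a given solution. However, on $C([-T,T];X)$ it yields analyticity in $\vartheta$ only; to get analyticity in $t$ one has to redo it on a space of bounded holomorphic curves on the disc $D_T$. Your iteration gives joint holomorphy in $(t,\vartheta)$ on $D_T\times B_r(\vartheta_0)$ directly, with explicit $T$ and $r$ in terms of $M$ and $R$, using only Cauchy estimates and Weierstrass' theorem.

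A few minor points.
\begin{itemize}
\item Your first step (analytic $\Rightarrow$ locally bounded) uses the standard Fr\'echet/power-series notion of analyticity. With only the weak G\^ateaux definition written at the start of the Appendix, local boundedness is not automatic, and the proposition would then be false. For instance, take $X=\mathbb{C}$, $\xi_0=1$ and $F(\xi,\vartheta)=\ell(\vartheta)\xi^2$ with $\ell$ a discontinuous linear functional on $Y$; then there is no common existence time on any ball $B_r(\vartheta_0)$. So your reading of ``analytic'' is the right one.
\item In the uniqueness step, the bound $\|\dot\zeta\|\le M$ is only available while $\zeta$ lies in $\overline{B_R(\xi_0)}$. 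The confinement of $\zeta$ to $B_{R/2}(\xi_0)$ should therefore be phrased as a first-exit-time argument using $MT<R/2$.
\item When proving analyticity of the iterates, the functionals should range over $X^*$, not $Y^*$.
\end{itemize}
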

\begin{proof}
    See \cite{Shn12}, Theorem 2.2 or \cite{Zeidler1986}, Chapter 4.
\end{proof}

\begin{proposition}\label{prop:AIFT_Shnirelman}
  Let $E, F, G$ be complex Banach spaces, and let $\Phi: E \times F \to G$ be an analytic mapping defined in a neighborhood of a point $(x_0, y_0) \in E \times F$, with $\Phi(x_0, y_0) = z_0$. Suppose that the partial derivative $\frac{\partial \Phi}{\partial x} (x_0, y_0)$ is an invertible linear operator whose image is all of $G $. Then:  

1. There exists a radius $r > 0$ such that for any $y \in F$ and $z \in G$ satisfying $\|y - y_0\|_F \leq r$ and $\|z - z_0\|_G \leq r$, the equation $\Phi(x, y) = z$ has a unique solution $x(y, z)$ in a neighborhood of $x_0$.  

2. The function $x(y, z)$ is analytic with respect to $(y, z)$.  
\end{proposition}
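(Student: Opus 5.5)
We would prove Proposition \ref{prop:AIFT_Shnirelman} by reducing it to a contraction mapping argument with analytic Picard iterates. Analyticity of the solution then follows because a uniform limit of locally bounded analytic maps is analytic. This avoids differentiating the implicit solution directly.

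\textbf{Step 1: Set-up and derivative bounds.} Put $A=\frac{\partial\Phi}{\partial x}(x_0,y_0)$. By hypothesis $A:E\to G$ is a bijective bounded operator, so $A^{-1}$ is bounded by the open mapping theorem. Since $\Phi$ is analytic, it is locally bounded on a neighbourhood of $(x_0,y_0)$. The Cauchy estimates recalled at the start of the Appendix, applied to $z\mapsto y^*(\Phi(x+zh,y))$, show that the derivative $\partial_x\Phi$ is locally bounded and locally Lipschitz, hence continuous in operator norm. So it coincides with the Fréchet differential. Consequently there is $\rho>0$ with
\[
\big\|I-A^{-1}\partial_x\Phi(x,y)\big\|\le \tfrac12
\quad\text{for } \|x-x_0\|_E\le\rho,\ \|y-y_0\|_F\le\rho .
\]

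\textbf{Step 2: Contraction.} Define
\[
T(x;y,z)=x-A^{-1}\big(\Phi(x,y)-z\big).
\]
Fixed points of $T(\cdot\,;y,z)$ are exactly the solutions of $\Phi(x,y)=z$. By the mean value estimate and Step 1, $T(\cdot\,;y,z)$ is a $\tfrac12$-contraction on the closed ball $\overline{B}_\rho(x_0)$. We also need $T$ to map this ball into itself. By continuity, and since $\Phi(x_0,y_0)=z_0$, we can choose $r\le\rho$ so small that
\[
\|T(x_0;y,z)-x_0\|\le \|A^{-1}\|\,\big(\|\Phi(x_0,y)-\Phi(x_0,y_0)\|+\|z-z_0\|\big)\le \rho/2
\]
whenever $\|y-y_0\|\le r$ and $\|z-z_0\|\le r$. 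The contraction principle then gives a unique solution $x(y,z)$ in $\overline B_\rho(x_0)$. This proves part 1.

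\textbf{Step 3: Analyticity of the solution.} Consider the Picard iterates
\[
x_0(y,z)\equiv x_0,\qquad x_{k+1}(y,z)=T\big(x_k(y,z);y,z\big).
\]
Each $x_k$ is analytic in $(y,z)$, since compositions and sums of analytic maps are analytic and $A^{-1}$ is bounded linear. All iterates lie in $\overline B_\rho(x_0)$, so they are uniformly bounded. They converge to $x(y,z)$ uniformly, with
\[
\|x_k-x\|\le 2^{-k}\rho .
\]
Now fix a point, a direction, and a functional $\ell\in E^*$. The scalar functions $\zeta\mapsto \ell\big(x_k((y,z)+\zeta(h,k'))\big)$ are holomorphic on a small disc. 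They converge uniformly, so the limit $\zeta\mapsto \ell\big(x((y,z)+\zeta(h,k'))\big)$ is holomorphic by the classical Weierstrass theorem. Thus $x$ is weakly (Gâteaux) holomorphic and locally bounded, which is analyticity in the sense of the Appendix definition. This proves part 2.

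\textbf{Main obstacle.} The only delicate points are at the two ends of the argument. The first is deriving continuity of $\partial_x\Phi$ from the weak definition of analyticity, via Cauchy estimates plus local boundedness. The second is the passage from weak holomorphy back to analyticity for the limit map. Both are standard facts, cf.\ \cite{Zeidler1986}. For the first we would try the Cauchy integral formula for $\Phi(x+\zeta h,y)$ on a circle of fixed radius, which gives uniform bounds on second derivatives.
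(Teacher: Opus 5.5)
Your proof is correct. It is also genuinely different from what the paper does, because the paper gives no argument for this proposition: it only refers to \cite{Zeidler1986} and \cite{Shn12}.

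\textbf{Comparison with the other standard route.} The usual alternative is to get $x(y,z)$ from the ordinary $C^1$ implicit function theorem and then differentiate $\Phi(x(y,z),y)=z$. The derivative $(k,w)\mapsto(\partial_x\Phi)^{-1}\big(w-\partial_y\Phi\,k\big)$ is complex linear and continuous in $(y,z)$. Hence $x$ is complex Fr\'echet differentiable, and therefore holomorphic.

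You instead get existence, uniqueness and analyticity from one contraction. The Picard iterates are holomorphic, stay in $\overline{B}_\rho(x_0)$, and converge uniformly at rate $2^{-k}\rho$. You pass to the limit by Weierstrass along complex lines, plus the fact that a locally bounded, weakly G\^ateaux-holomorphic map is holomorphic. So you trade the theorem ``complex $C^1$ implies analytic'' for ``locally uniform limits of holomorphic maps are holomorphic''.

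Your version is more self-contained and gives explicit uniform radii. It also applies directly to the fixed-point problem $\varphi=Q(v,\varphi)$ in the chart lemma of Section 3. There the contraction the authors already set up would yield analyticity of $v\mapsto\varphi_v$ without a separate appeal to this proposition.

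\textbf{Points to make explicit.} The claim ``since $\Phi$ is analytic, it is locally bounded'' holds for the standard notion of analyticity (locally convergent power series, as in \cite{Zeidler1986}). It does not hold for the purely weak definition written at the start of the Appendix, and under that definition part 1 is actually false. Take $\lambda$ a discontinuous linear functional on $F$ and $\Phi(x,y)=x+\lambda(y)x^2$ on $\mathbb{C}\times F$. This map is weakly G\^ateaux-holomorphic with $\partial_x\Phi(0,0)=1$. Yet for $z=0$ the equation has the two roots $0$ and $-1/\lambda(y)$, and $|\lambda(y)|$ is unbounded on every ball around $0$. So uniqueness in any fixed neighborhood of $x_0=0$ fails, and you should state that the strong notion is assumed.

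Also, to make $\|I-A^{-1}\partial_x\Phi(x,y)\|\le\frac12$ uniform in $y$, the Cauchy estimates must be taken along directions $(h,k)\in E\times F$. Directions in $x$ alone, as in your last sentence, are not enough.
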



\subsection{Inequalities for fractional Sobolev functions}\label{appendix_ineq}
In this subsection we will establish inequalities of the form 
\[
\|f+g\| \geq C (\|f\|_s+\|g\|_s)
\]
for functions $f,g$ with disjoint support. For fractional $s$ this causes some difficulties as the norm $\|\cdot\|_s$ is defined in a non-local way. For fixed supports we have

\begin{lemma}\label{lemma_fixed_supp}
	Let $s \in \mathbb R$. There is a constant $C > 0$ such that for all $f,g \in C^\infty_c(\mathbb R)$ with $\operatorname{supp} f \subseteq (-3,-1)$ and $\operatorname{supp} g \subseteq (1,3)$ we have
	\[
	\|f+g\|_s^2 \geq C (\|f\|_s^2 + \|g\|_s^2)
	\]
\end{lemma}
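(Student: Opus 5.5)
We need to prove: for $s\in\mathbb R$ there is $C>0$ such that for $f,g$ supported in $(-3,-1)$ and $(1,3)$ respectively, $\|f+g\|_s^2\ge C(\|f\|_s^2+\|g\|_s^2)$.

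The plan is to write $\|f+g\|_s^2=\|f\|_s^2+\|g\|_s^2+2\,\mathrm{Re}\langle f,g\rangle_s$ and show that the cross term is small compared with the diagonal terms. Choose a smooth cutoff $\chi\in C_c^\infty(\mathbb R)$ with $\chi\equiv 1$ on $[-3,-1]$ and $\chi\equiv 0$ on $[0,\infty)$. Then $f=\chi(f+g)$ and $g=(1-\chi)(f+g)$ for all admissible $f,g$. Multiplication by a smooth function with bounded derivatives of all orders (here $\chi$ and $1-\chi$) is a bounded operator on $H^s(\mathbb R)$ for every real $s$. This is standard: for $s\ge 0$ integer it follows from the Leibniz rule, for general $s\ge0$ by interpolation, and for $s<0$ by duality, since multiplication by a real smooth function is self-adjoint with respect to the $L^2$ pairing. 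Hence
\[
\|f\|_s\le A\|f+g\|_s,\qquad \|g\|_s\le A\|f+g\|_s,
\]
with $A$ depending only on $s$ and $\chi$. Squaring and adding gives $\|f\|_s^2+\|g\|_s^2\le 2A^2\|f+g\|_s^2$, which is the lemma with $C=(2A^2)^{-1}$.

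The only point requiring care is the boundedness of the multiplier $\chi$ on $H^s$ for negative or fractional $s$, where the norm is nonlocal. The cleanest argument is in Fourier space. We have $\widehat{\chi u}=\hat\chi*\hat u$, and Peetre's inequality $\langle\xi\rangle^{s}\le 2^{|s|}\langle\xi-\eta\rangle^{|s|}\langle\eta\rangle^{s}$ then gives
\[
\|\chi u\|_s\le 2^{|s|}\,\big\|\langle\cdot\rangle^{|s|}\hat\chi\big\|_{L^1}\,\|u\|_s .
\]
The $L^1$ norm on the right is finite because $\hat\chi$ is a Schwartz function. This single estimate covers all $s\in\mathbb R$ uniformly, so no interpolation or duality is needed. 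The same argument applies verbatim in $\mathbb R^2$, which is what \eqref{ineq2} requires. For that purpose one takes a cutoff equal to $1$ on one ball and $0$ on the other; rescaling handles radius $r<1$, with constants depending on $r$. This is acceptable as long as $r$ is fixed, and it is the place where the later lemma on moving supports needs extra care.
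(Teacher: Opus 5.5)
Your proof is correct and is essentially the paper's argument: the paper likewise writes $f=\gamma(f+g)$ and $g=\psi(f+g)$ with smooth cutoffs $\gamma,\psi$ equal to $1$ on the respective support intervals, and appeals to the boundedness of multiplication by smooth functions on $H^s$. The differences are cosmetic: you use $\chi$ and $1-\chi$ in place of two compactly supported cutoffs, so the bound for $g$ picks up a harmless extra $1$ from the triangle inequality, and you supply, via Peetre's inequality, the justification of the multiplier bound for all real $s$ that the paper takes for granted.
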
 
\begin{proof}
	We take $\gamma,\psi \in C^\infty_c(\mathbb R)$ with $\operatorname{supp}\gamma \subseteq (-3.5,-0.5)$ and $\operatorname{supp}\psi \subseteq (0.5,3.5)$ such that $\left.\gamma\right|_{(-3,-1)} \equiv 1$ and $\left.\psi\right|_{(1,3)} \equiv 1$. We then have
	\[
	\|f\|_s = \|\gamma (f+g)\|_s \leq C_1 \|f+g\|_s
	\]
	and similarly
	\[
	\|g\|_s = \|\psi (f+g)\|_s \leq C_2 \|f+g\|_s
	\]
	giving the desired result.
\end{proof}

In the following we will use the fact that the $H^s$-norm is equivalent to the homogeneous $\dot H^s$-norm if we restrict ourselves to functions with support in a fixed compact $K \subseteq \mathbb R$ (see e.g. \cite{BahouriCheminDanchin2011} p. 39). Recall
\[
\|f\|_{\dot H^s}^2 = \int_\mathbb R |\xi|^{2s} |\hat f(\xi)|^2 d\xi
\]
We often also use $f^\lambda(x):=f(x/\lambda)$ for which we have the following scaling property
\[
\|f^\lambda\|_{\dot H^s}^2 = \lambda^{1-2s} \|f\|_{\dot H^s}^2 
\]
We have

\begin{lemma}\label{lemma_vanishing_support1}
	Let $s \geq 0$. Then there is a constant $C > 0$ with the following property:
	For $x,y$ in $\mathbb R$ with $0 < r:=|x-y|/4 < 1$  we have
	\[
	\|f+g\|_s^2 \geq C  (\|f\|_s^2 + \|g\|_s^2)
	\]
	for all functions $f,g \in C^\infty_c(\mathbb R)$ with $\operatorname{supp}f \subseteq (x-r,x+r)$, $\operatorname{supp}g \subseteq (y-r,y+r)$
\end{lemma}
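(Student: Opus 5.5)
Since $f$ and $g$ have disjoint supports at mutual distance at least $2r$, the plan is to reduce to Lemma \ref{lemma_fixed_supp} by translation and dilation, keeping track of how the inhomogeneous $H^s$ norm changes under scaling. By translation invariance of $\|\cdot\|_s$ we may assume $x=-2r$, $y=2r$. Then $\operatorname{supp} f\subseteq(-3r,-r)$ and $\operatorname{supp} g\subseteq(r,3r)$. Put $F(z)=f(rz)$ and $G(z)=g(rz)$, so that $f=F^{r}$ and $g=G^{r}$ in the notation $f^\lambda(x)=f(x/\lambda)$. Now $\operatorname{supp}F\subseteq(-3,-1)$ and $\operatorname{supp}G\subseteq(1,3)$, and Lemma \ref{lemma_fixed_supp} applies to $F,G$.

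The inhomogeneous norm does not scale cleanly, so we write $\|h\|_s^2\simeq \|h\|_{L^2}^2+\|h\|_{\dot H^s}^2$, with constants depending only on $s\ge 0$. Both pieces scale by powers of $r$:
\[
\|F^r\|_{L^2}^2=r\|F\|_{L^2}^2,\qquad \|F^r\|_{\dot H^s}^2=r^{1-2s}\|F\|_{\dot H^s}^2 .
\]
Because the supports of $F$ and $G$ lie in the fixed compact set $[-3,3]$, the homogeneous and inhomogeneous norms are equivalent there. We will not combine the two pieces and rescale together, since $r$ and $r^{1-2s}$ are different powers. Instead we apply a disjoint-support splitting to each piece separately.

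\textbf{$L^2$ piece.} This is trivial and needs no rescaling: $\|f+g\|_{L^2}^2=\|f\|_{L^2}^2+\|g\|_{L^2}^2$.

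\textbf{$\dot H^s$ piece.} This requires the analogue of Lemma \ref{lemma_fixed_supp} with $\dot H^s$ in place of $H^s$ for functions supported in $(-3,-1)\cup(1,3)$. It follows from Lemma \ref{lemma_fixed_supp} itself together with the norm equivalence on the compact set $[-3,3]$:
\[
\|F+G\|_{\dot H^s}^2\ \gtrsim\ \|F+G\|_s^2\ \ge\ C\bigl(\|F\|_s^2+\|G\|_s^2\bigr)\ \gtrsim\ \|F\|_{\dot H^s}^2+\|G\|_{\dot H^s}^2 .
\]
The first step uses the fact that $F+G$ is supported in $[-3,3]$. Multiplying by $r^{1-2s}$ and using the scaling identity gives
\[
\|f+g\|_{\dot H^s}^2\ge C'\bigl(\|f\|_{\dot H^s}^2+\|g\|_{\dot H^s}^2\bigr),
\]
with $C'$ independent of $r$. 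Adding the $L^2$ identity and using $\|h\|_s^2\simeq\|h\|_{L^2}^2+\|h\|_{\dot H^s}^2$ yields the claim with a constant independent of $r\in(0,1)$ and of the points $x,y$.

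The main obstacle is the equivalence $\|h\|_{\dot H^s}\simeq\|h\|_s$ on functions supported in a fixed compact set, cited from \cite{BahouriCheminDanchin2011}. It is valid for $s\ge 0$ (and $s<1/2$ causes no trouble in one dimension), but only the direction $\|h\|_s\lesssim\|h\|_{\dot H^s}$ is nontrivial. That direction amounts to a Poincaré-type bound $\|h\|_{L^2}\lesssim\|h\|_{\dot H^s}$ for compactly supported $h$. If needed, I would prove it by splitting frequencies: for $|\xi|\ge1$, $|\hat h|^2\le|\xi|^{2s}|\hat h|^2$; for $|\xi|<1$, bound $|\hat h(\xi)|$ via $\|h\|_{L^1}\lesssim\|h\|_{L^2}$, with a compactness/contradiction argument on the support to close the estimate.

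Finally, we use the restriction $r<1$ only to guarantee that everything stays in a regime where the constants are uniform. In fact the argument above is scale-free, so this restriction is harmless.
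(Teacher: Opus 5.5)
Your proof is correct, and it differs from the paper's in one useful respect.

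The paper works in $\dot H^s$ throughout. It rescales, applies Lemma~\ref{lemma_fixed_supp} via the equivalence $\|\cdot\|_{\dot H^s}\simeq\|\cdot\|_s$ at unit scale, and scales back to get $\|f+g\|_{\dot H^s}^2\ge C(\|f\|_{\dot H^s}^2+\|g\|_{\dot H^s}^2)$. It then tacitly returns to $\|\cdot\|_s$ by applying the same equivalence to $f$ and $g$ themselves. That last step is where $r<1$ enters. After centering, the supports lie in the fixed set $(-3,3)$, so the Poincar\'e-type constant in $\|h\|_{L^2}\lesssim\|h\|_{\dot H^s}$ is uniform. On $(-3r,3r)$ that constant scales like $r^s$, so it would blow up as $r\to\infty$.

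Your splitting $\|h\|_s^2\simeq\|h\|_{L^2}^2+\|h\|_{\dot H^s}^2$ disposes of the $L^2$ part by exact orthogonality of disjointly supported functions. The norm equivalence is then needed only once, at unit scale on $[-3,3]$. As a result, the constant is uniform in all $r>0$, the hypothesis $r<1$ becomes superfluous, and the return to the inhomogeneous norm is explicit rather than implicit. Your normalization $F(z)=f(rz)$ is also the one that actually produces the configuration of Lemma~\ref{lemma_fixed_supp}. The paper's $\lambda=(4r)^{-1}$ gives supports in $(-\tfrac34,-\tfrac14)$ and $(\tfrac14,\tfrac34)$, which is harmless since any fixed separated pair of intervals works. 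The paper's route is a line shorter.

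Both arguments rest on the same Poincar\'e-type bound. In your sketch of it, the low-frequency estimate $|\hat h(\xi)|\le\|h\|_{L^1}\lesssim\|h\|_{L^2}$ is by itself circular, so it is the compactness (Rellich) argument that does the work. A direct proof for $0<s<1$ comes from the Gagliardo form:
$\|h\|_{\dot H^s}^2=c_s\iint|h(x)-h(y)|^2|x-y|^{-1-2s}\,dx\,dy\ge c_s\int_{-3}^{3}|h(x)|^2\int_{|y|>4}|x-y|^{-1-2s}\,dy\,dx$ for $h$ supported in $[-3,3]$. Larger $s$ then follow from the interpolation bound $\|h\|_{\dot H^{s_0}}\le\|h\|_{L^2}^{1-s_0/s}\|h\|_{\dot H^s}^{s_0/s}$.
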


\begin{proof}
	We use the homogeneous norm. Now scaling with $\lambda=(4r)^{-1}$ gives a situation as in Lemma \ref{lemma_fixed_supp}. We have
	\[
	\|f+g\|_{\dot H^s}^2 = \lambda_n^{2s-1} \|f^{\lambda}+g^{\lambda}\|_{\dot H^s}^2 
	\]
	Now by Lemma \ref{lemma_fixed_supp} we then get
	\[
	\|f+g\|_{\dot H^s}^2 \geq C \lambda^{2s-1} \left(\|f^{\lambda}\|_{\dot H^s}^2+\|g^{\lambda}\|_{\dot H^s}^2 \right)
	\]
	Scaling back gives
	\[
	\|f+g\|_{\dot H^s}^2 \geq C (\|f\|_{\dot H^s}^2+\|g\|_{\dot H^s}^2)
	\]
	This establishes the lemma.
\end{proof}

We will encounter Lemma \ref{lemma_vanishing_support1} also for some negative values of $s$. In these cases we will use

\begin{lemma}\label{lemma_vanishing_support2}
	Let $s <0$ and the same situation as in Lemma \ref{lemma_vanishing_support1}. Then we have 
	\[
	\|f+g\|_s^2 \geq C (\|f\|_s^2+\|g\|_s^2)
	\]
	for all functions $f,g \in C^\infty_c(\mathbb R)$ with $\operatorname{supp}f \subseteq (x-r,x+r)$, $\operatorname{supp}g \subseteq (y-r,y+r)$
\end{lemma}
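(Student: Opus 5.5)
We need to prove Lemma \ref{lemma_vanishing_support2}: for $s<0$, the same reverse triangle inequality holds for $f,g$ with supports in $(x-r,x+r)$, $(y-r,y+r)$, where $0<r=|x-y|/4<1$. The plan is to reduce, by scaling and translation, to the fixed-support situation of Lemma \ref{lemma_fixed_supp}, which already holds for every real $s$, negative included. The point requiring care is that for $s<0$ the inhomogeneous $H^s$ norm is no longer equivalent to the homogeneous one. Indeed, $\dot H^s$ with $s\le -1/2$ in one dimension is not even finite for functions of nonzero mean. So the scaling argument of Lemma \ref{lemma_vanishing_support1} cannot be copied verbatim.

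First, since $H^s$ norms are translation invariant, assume $x=-2r$ and $y=2r$. Put $\lambda=(4r)^{-1}>1/4$ and write $f=F^{1/\lambda}$, $g=G^{1/\lambda}$, where $F(z)=f(z/\lambda)$ and $G$ is defined likewise. Then $\operatorname{supp}F\subseteq(-3/2\cdot,\dots)$ is contained in $(-3,-1)$, and similarly $\operatorname{supp}G\subseteq(1,3)$, after adjusting the constant factor exactly as in the $s\ge0$ proof.

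Second, use duality: $\|h\|_{s}=\sup\{\langle h,\phi\rangle:\|\phi\|_{-s}\le1\}$, with $-s>0$. Choose the cutoffs $\gamma,\psi$ as in Lemma \ref{lemma_fixed_supp}, now rescaled to $\gamma_r(z)=\gamma(\lambda z)$ and $\psi_r(z)=\psi(\lambda z)$. These equal $1$ on the supports of $f$ and $g$ respectively, and have disjoint supports. For $\phi$ in the unit ball of $H^{-s}$ we have $\langle f,\phi\rangle=\langle f+g,\gamma_r\phi\rangle$. Hence
$$\|f\|_s\le\|f+g\|_s\,\sup_{\|\phi\|_{-s}\le1}\|\gamma_r\phi\|_{-s}.$$
The key step is therefore a bound $\|\gamma_r\phi\|_{H^{-s}}\le C\|\phi\|_{H^{-s}}$ that is uniform in $r\in(0,1)$. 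Since $-s>0$, this is exactly the multiplier estimate already implicitly used in the $s\ge0$ case.

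The main obstacle is that this multiplier norm is \emph{not} uniform as $r\to0$: the derivatives $\partial^k\gamma_r$ grow like $r^{-k}$, so $\|\gamma_r\|_{C^{k}}\sim r^{-k}$. To handle this, I would first try to use the restriction to compactly supported functions and compare with homogeneous norms. For $0<\sigma=-s<1/2$, the space $\dot H^{\sigma}$ is scale invariant under $\phi\mapsto\phi(\lambda\,\cdot)$ up to the factor $\lambda^{2\sigma-1}$. Multiplication by a fixed smooth cutoff is bounded on $\dot H^\sigma\cap$ functions of local support. Together these give a uniform homogeneous multiplier bound, and then one dualizes back using the equivalence of $H^s$ and $\dot H^s$ on functions supported in a fixed compact set, as in the proof of Lemma \ref{lemma_vanishing_support1}. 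For larger $|s|$, I would fall back on a weaker inequality whose constant depends on $r$. That suffices for the application in the proof of Theorem \ref{nonuniform}, where one only needs positivity of the limsup along a fixed sequence, possibly after restricting to $r$ bounded below. Making this range restriction explicit is where I expect the real difficulty to lie.

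Finally, the same duality argument applied with $\psi_r$ gives the bound for $\|g\|_s$. Squaring and adding the two bounds yields $\|f+g\|_s^2\ge C(\|f\|_s^2+\|g\|_s^2)$.
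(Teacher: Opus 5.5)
For $s\le -1/2$ your proposal does not prove the lemma. You fall back on a constant depending on $r$, whereas the statement asserts one constant $C$ for all $0<r<1$.

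Such a constant would not help in Theorem~\ref{nonuniform} either. There the supports have radii of order $1/n\to 0$, so uniformity as $r\to 0$ is exactly what is used. That proof, however, works with $s>2$ and Lemma~\ref{lemma_vanishing_support1}, not with this lemma.

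The gap cannot be closed, because the statement is false for every $s\le-1/2$. The obstruction is the one you identified, namely that $\dot H^s$ is infinite on functions of nonzero mean. The paper's proof overlooks it: its claimed equivalence of $\|\cdot\|_s$ and $\|\cdot\|_{\dot H^s}$ on functions supported in a fixed compact set fails for such functions when $s\le-1/2$.

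Here is a counterexample. Take $\phi\in C^\infty_c(-1,1)$ with $\phi\ge 0$ and $\int\phi=1$. Put $x=0$, $y=4r$, $f=\phi(\cdot/r)$ and $g=-\phi((\cdot-4r)/r)$. Then $\hat f(\xi)=r\hat\phi(r\xi)$ and $\widehat{f+g}(\xi)=r\hat\phi(r\xi)(1-e^{-4ir\xi})$. Moreover $|\hat\phi|\le 1$ everywhere and $|\hat\phi|\ge 1/2$ on some interval $|\eta|\le c$, so
\[
\|f\|_s^2=\|g\|_s^2\gtrsim r^2\int_{|\xi|\le c/r}(1+\xi^2)^s\,d\xi\gtrsim
\begin{cases} r^2, & s<-1/2,\\ r^2\log(1/r), & s=-1/2.\end{cases}
\]
On the other hand, $|1-e^{-4ir\xi}|^2\le\min(16r^2\xi^2,4)$ gives, for small $r$,
\[
\|f+g\|_s^2\lesssim r^4\int_{|\xi|\le 1/r}(1+\xi^2)^s\xi^2\,d\xi+r^{1-2s}\int_{|\eta|\ge 1}|\eta|^{2s}|\hat\phi(\eta)|^2\,d\eta\lesssim r^{1-2s}+r^4\log(1/r).
\]
Hence $\|f+g\|_s^2/(\|f\|_s^2+\|g\|_s^2)\to 0$ as $r\to 0$ for all $s\le -1/2$. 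The mean-zero dipole $f+g$ is much smaller in $H^s$ than either monopole. In $\mathbb R^d$ the corresponding threshold is $s\le -d/2$.

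For $-1/2<s<0$ your argument is correct and is essentially the paper's. Both rest on scale invariance of the homogeneous norm together with the equivalence of $\|\cdot\|_s$ and $\|\cdot\|_{\dot H^s}$ on a fixed compact set. You use scale invariance through a uniform $\dot H^{-s}$ bound for the rescaled cutoffs plus duality; the paper rescales to Lemma~\ref{lemma_fixed_supp}.

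The norm equivalence needs its own justification for $s<0$, and this is exactly where $s>-1/2$ enters. Take $\chi\in C^\infty_c$ equal to $1$ on the compact set. Then $|\hat f(\xi)|=|\langle f,\chi e^{-i(\cdot)\xi}\rangle|\le C\|f\|_s$ for $|\xi|\le 1$, and $|\xi|^{2s}$ is integrable at the origin. So the correct statement is the lemma restricted to $-1/2<s<0$, and your proposal establishes that version.

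A minor slip: with $x=-2r$ and $y=2r$, the scaling that lands in Lemma~\ref{lemma_fixed_supp} is $\lambda=r^{-1}$, not $(4r)^{-1}$.
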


\begin{proof}
	We claim that for functions with support in some fixed compact set $K \subseteq \mathbb R$ the homogeneous norm
	\[
	\|f\|_{\dot H^s}^2 = \int_\mathbb R |\xi|^{2s} |\hat f(\xi)|^2 d\xi
	\]
	is equivalent to the non-homegeneous norm $\|\cdot\|_s$. One then can argues as in Lemma \ref{lemma_vanishing_support1} by scaling to a fixed situation as in Lemma \ref{lemma_fixed_supp}. So it remains to show the equivalence of the norms. We clearly have $\|\cdot\|_s \leq \|\cdot\|_{\dot H^s}$ since
	\[
	\int_\mathbb R (1+\xi^2)^s |\hat f(\xi)|^2 d\xi \leq \int_\mathbb R \xi^{2s} |\hat f(\xi)|^2 d\xi
	\]
	For the other direction we use the dual definition of the Sobolev norm
	\[
	\|f\|_s = \sup_{\|g\|_{-s} \leq  1} |\langle f,g \rangle|
	\]
	and analogously for the homogeneous norm. Taking $\psi \in C_c^\infty(\mathbb R)$ with $\psi=1$ on $K$ we have for $f$ with support in $K$
	\[
	\|f\|_{\dot H^s} = \sup_{\|g\|_{\dot H^{-s}}\leq 1} |\langle f, \psi \cdot g \rangle|
	\]
	Now note that we have equivalence of the norms $\|\cdot\|_{-s}$ and $\|\cdot\|_{\dot H^{-s}}$ for functions with support in some fixed compact. Therefore
	\begin{eqnarray*}
		\|f\|_{\dot H^s} &=& \sup_{\|g\|_{\dot H^s} \leq 1} |\langle f, \psi \cdot g \rangle| \leq \sup_{g,\|\psi \cdot g\|_{-s} \leq C_1} |\langle f, \psi \cdot g \rangle| \\ &\leq& C_1 \sup_{\|g\|_{-s} \leq 1} |\langle f, g \rangle|
		=\|f\|_{-s}
	\end{eqnarray*}
	showing the equivalence.
\end{proof}


\vspace{0.5cm}
\noindent
{\bf Acknowledgments.}\ 
GM thanks the Hitotsubashi University for providing excellent conditions to finish the work on the paper.
Research of  TY  was partly supported by the JSPS Grants-in-Aid for Scientific Research  24H00186.

\vspace{0.5cm}
\noindent
\textbf{Conflict of Interest.}
The authors declare that they have no conflict of interest.

\bibliographystyle{plain}
\bibliography{NUgSQG}

@article{BL,
  author = {J. Bourgain and D. Li},
  title = {Galilean boost and non-uniform continuity for incompressible {E}uler},
  journal = {Commun. Math. Phys.},
  volume = {372},
  year = {2019},
  pages = {261-380}
}

@article{BL2015,
  author    = {J. Bourgain and D. Li},
  title     = {Strong ill-posedness of the incompressible {E}uler equation in borderline {S}obolev spaces},
  journal   = {Inventiones Mathematicae},
  volume    = {201},
  year      = {2015},
  pages     = {97--157},
  doi       = {10.1007/s00222-015-0601-0}
}

@article{CCCGW12,
  author = {D. Chae and P. Constantin and D. Córdoba and F. Gancedo and J. Wu},
  title = {Generalized surface quasi-geostrophic equations with singular velocities},
  journal = {Comm. Pure Appl. Math.},
  volume = {65},
  year = {2012},
  pages = {1037-1066}
}

@article{CJK25,
  author = {Y-P. Choi and J. Jung and J. Kim},
  title = {On well/ill-posedness for the generalized surface
quasi-geostrophic equation in H\"older spaces},
  journal = {Journal of Differential Equations},
  volume = {443},
  year = {2025},
  pages = {113521}
}

@article{EbMa,
  author = {D. Ebin and J. Marsden},
  title = {Groups of diffeomorphisms and the motion of an incompressible fluid},
  journal = {Ann. Math.},
  volume = {92},
  year = {1970},
  pages = {102-163}
}

@article{I-JJ,
  author = {I-J. Jeong},
  title = {Generalized surface quasi-geostrophic equations: wellposedness and dynamical properties},
  journal = {Lecture notes (nr 59), SNU},
  volume = {},
  year = {2024},
  pages = {}
}

@article{MY,
  author = {G. Misiołek and T. Yoneda},
  title = {Continuity of the solution map of the {E}uler equations in {H}ölder spaces and weak norm inflation in {B}esov spaces},
  journal = {Trans. Amer. Math. Soc.},
  volume = {370},
  year = {2018},
  pages = {4709-4730}
}

@article{Shn12,
  author = {A. Shnirelman},
  title = {On the analyticity of particle trajectories in the ideal incompressible fluid},
  journal = {Global Stoch. Anal.},
  volume = {2},
  year = {2012},
  pages = {149-157}
}

@article{Wash,
  author = {P. Washabaugh},
  title = {The {SQG} equation as a geodesic equation},
  journal = {Arch. Rational Mech. Anal.},
  volume = {222},
  year = {2016},
  pages = {1269-1284}
}

@article{YZJ,
  author = {H. Yu and X. Zheng and Q. Jiu},
  title = {Remarks on well-posedness of the generalized surface quasi-geostrophic equation},
  journal = {Arch. Rational Mech. Anal.},
  volume = {232},
  year = {2019},
  pages = {265-301}
}

@article{constantin2016contrast,
  author    = {P. Constantin and I. Kukavica and V. Vicol},
  title     = {Contrast between {L}agrangian and {E}ulerian analytic regularity properties of {E}uler equations},
  journal   = {Ann. Inst. Henri Poincaré C},
  volume    = {33},
  pages     = {1569--1588},
  year      = {2016}
}

@article{inci2015regularity,
  author = {H. Inci},
  title = {On the regularity of the solution map of the incompressible {E}uler equation},
  journal = {Dyn. Partial Differ. Equ.},
  volume = {12},
  pages = {97--113},
  year = {2015}
}

@article{Inci_K_T2013,
  author    = {H. Inci and T. Kappeler and P. Topalov},
  title     = {On the regularity of the composition of diffeomorphisms},
  journal   = {Memoirs of the American Mathematical Society},
  volume    = {226},
  number    = {1062},
  year      = {2013},
  doi       = {10.1090/memo/1062}
}

@article{jolly2021sqg,
  author = {M. Jolly and A. Kumar and V. Martinez},
  title = {On the existence, uniqueness and smoothing of solutions to the generalized {SQG} equations in critical {S}obolev spaces},
  journal = {Commun. Math. Phys.},
  volume = {387},
  pages = {551--596},
  year = {2021},
  eprint = {arXiv:2101.07228v1}
}

@book{Zeidler1986,
  author    = {E. Zeidler},
  title     = {Nonlinear Functional Analysis and its Applications: I: Fixed-Point Theorems},
  year      = {1986},
  publisher = {Springer},
  edition   = {1986th},
  translator = {P. R. Wadsack},
  isbn      = {978-0387964992}
}

@article{CMT94,
  author    = {P. Constantin and A. Majda and E.  Tabak},
  title     = {Singular front formation in a model for quasigeostrophic flow},
  journal   = {Physics of Fluids},
  volume    = {6},
  number    = {1},
  year      = {1994},
  pages     = {9--11},
  doi       = {10.1063/1.868273}
}

@article{MV2023,
  author    = {G. Misiołek and X.-T. Vu},
  title     = {On Continuity Properties of Solution Maps of the Generalized {SQG} Family},
  journal   = {Vietnam Journal of Mathematics},
  year      = {2023},
  doi       = {10.1007/s10013-023-00633-6}
}

@book{BahouriCheminDanchin2011,
  author = {Bahouri, H. and Chemin, J.-Y. and Danchin, R.},
  title = {Fourier Analysis and Nonlinear Partial Differential Equations},
  publisher = {Springer},
  address = {Berlin, Heidelberg},
  year = {2011},
  series = {Grundlehren der mathematischen Wissenschaften},
  doi = {10.1007/978-3-642-16830-7},
  isbn = {978-3-642-16830-7}
}

@article{Whittlesey1965,
  author    = {E. Whittlesey},
  title     = {Analytic functions in Banach spaces},
  journal   = {Proceedings of the American Mathematical Society},
  volume    = {16},
  pages     = {1077--1083},
  year      = {1965}
}

@article{castro2025unstable,
  title={Unstable vortices, sharp non-uniqueness with forcing, and global smooth solutions for the {SQG} equation},
  author={A. Castro and D. Faraco and F. Mengual and M. Solera},
  journal={arXiv preprint arXiv:2502.10274},
  year={2025}
}

@article{bauer2024geometric,
  title={Geometric analysis of the generalized surface quasi-geostrophic equations},
  author={M. Bauer and P. Heslin and G. Misio{\l}ek and S. Preston},
  journal={Mathematische Annalen},
  volume={390},
  number={3},
  pages={4639--4655},
  year={2024},
  publisher={Springer}
}


\end{document}